\titleformat{\chapter}[display] {\normalfont\Large\filcenter\normalfont}
{\titlerule[1pt]\vspace{1pt}%
\titlerule\vspace{1pc}%
\LARGE\MakeUppercase{\chaptertitlename} \thechapter} {1pc}{
\titlerule\vspace*{1pc}\Huge}
\titlespacing{\chapter}{0pt}{-3em}{3em}
\theoremstyle{break}
\newtheorem{thm}{Theorem}
\newtheorem{lem}{Lemma}
\newtheorem{defn}{Definition}
\newtheorem{rem}{Remark}
\newtheorem{cor}{Corollary}
\newtheorem{prop}{Proposition}
\newcommand{\dx}{\,\text{d}}
\newcommand{\qb}{\mathbf{b}}
\newcommand{\qd}{\mathbf{d}}
\newcommand{\qe}{\mathbf{e}}
\newcommand{\qk}{\mathbf{k}}
\newcommand{\qm}{\mathbf{m}}
\newcommand{\qn}{\mathbf{n}}
\newcommand{\qp}{\mathbf{p}}
\newcommand{\qu}{\mathbf{u}}
\newcommand{\qx}{\mathbf{x}}
\newcommand{\qy}{\mathbf{y}}
\newcommand{\FT}{\mathcal{F}} %FOURIER
\newcommand{\ZT}{\mathcal{Z}} %ZAK
\newcommand{\QT}{\mathcal{Q}} %Two Sided QFT
\newcommand{\QH}{\mathbb{H}}
\newcommand{\Sc}{\text{Sc}}
\newcommand{\VEC}{\text{Vec}}
\begin{document}
\
\begin{center}
\LARGE{\textbf{
Relaxed quaternionic Gabor expansions at critical density
}}
\end{center}
\begin{center}
Stefan Hartmann
\end{center}

\vspace{2em}
\begin{abstract}
Shifted and modulated Gaussian functions play a vital role in the representation of signals. We extend the theory into a quaternionic setting, using two exponential kernels with two complex numbers. As a final result, we show that every continuous and quaternion-valued signal $f$ in the Wiener space can be expanded into a unique $\ell^2$ series on a lattice at critical density $1$, provided one more point is added in the middle of a cell. We call that a \emph{relaxed Gabor expansion}.
\end{abstract}

\section{Introduction}
Phase, as have shown A. V. Oppenheim and J. S. Lim in the 1980s, codes all relevant details of an image \cite{OppenheimLim}. However, reasonable phase terms for higher dimensional settings, e.g. for 3-D image stacks or for 6 + 1 D (3D color films, e.g. from confocal microscopy) have long been an open question. Among the different concepts in 1999 T. Bülow introduced the hypercomplex signal in \cite{BULOWPHD}. The concept of hypercomplex signal was afterwards widely used in image processing and even applied to movies (see for instance the paper of W. Chan, H. Choi and R. Baraniuk \cite{Baraniuk, Baraniuk2}). Bülow's approach is based on a specific type of a quaternionic Fourier transform. There exist several possibilities how one can define a quaternionic Fourier transform, see e.g. T. A. Ells article \cite{TAELL}. Among these, the right sided version has been already widely discussed by E. Hitzer, M. Bahri et al. \cite{Bahri-Quat_Gabor, HITZER-Uncertainty, Hitzer-Uncertainty2} and others. Bülow's concept is based on the two sided version, where the signal is wedged between two exponential kernels. This idea leads to more symmetric properties \cite{BalianLowforWQFT}. For the mathematical background we refer to \cite{Heise, SpringerPaula}. But for practical application using only a Fourier expansion there is a major drawback. The quaternionic Fourier transform lacks an important property, this is the locality of phase behavior. In order to overcome this issue we multiply the signal by a real-valued window function with compact support and then apply the quaternionic Fourier transform. That way we obtain a representation both in space and phase and the so-called windowed quaternionic Fourier transform (WQFT). But in applications we can only work with a discrete set in the parameter space such as a lattice. A discretization of the WQFT leads into Gabor frame theory. An important question in that theory is, how dense we have to sense the signal, or in other words, which is the minimum size a cell in the lattice should have. In the non-quaternionic theory it is well-known, that under certain requirements to the window function, the area of a cell has to be smaller or equal than $1$ for a perfect reconstruction, whereas the case of greater than $1$ never leads into a perfect reconstruction. For the special case of the Gaussian as window function, the case equal to $1$, also referred to as \emph{critical density} is of main interest, since the set of coefficients is complete, but does not constitute a Gabor frame. V. Palamodov showed that there exist a unique $\ell^2$ expansion into shifted and modulated Gaussian windows for the critical density, provided one point is added in the middle of a cell \cite{Palamodov}. Unfortunately, the same can not be said about the quaternionic case. Up to now there is no result regarding the critical density. The principal goal of this work is to construct Gabor expansions with critical density, so-called \emph{relaxed Gabor expansions}, which is essential for the practical application of quaternionic Gabor expansions in Image Processing.\\
The paper is organized as follows: In Section 2 we briefly review some results on quaternionic algebra, the used function spaces and the WQFT. In Section 3 we introduce Gabor methods, such as the Gabor transform and the Gabor series. The fourth section treats the quaternionic Zak transform, a natural extension of the Zak transform, which is the main tool for the results. In the last section we show that a quaternionic signal has a unique $\ell^2$ expansion at critical density, when one more point is added to a lattice. Furthermore, a formula for the coefficients of the Gabor expansion is obtained.

\section{Preliminaries}
\subsection{Quaternions}
Quaternions are a non-commutative extension of the complex numbers into four dimensions.
The \emph{quaternion algebra} $\QH$ is given by
\[ \QH = \{ q \,|\, q = q_0 + i q_1 + j q_2 + kq_3, \text{\  with\  } q_0, q_1, q_2,q_3 \in \mathbb{R}\}, \]
where the elements $i,j,k$ satisfy
\[ ij = -ji = k, \qquad i^2 = j^2 = k^2 = ijk = -1. \]

We can write a quaternion $q$ also as a sum of a \emph{scalar} $\Sc[q] = q_0\in \mathbb{R}$ and a three-dimensional vector $\VEC[q] = i q_1 + j q_2 + k q_3$, which is often called \emph{pure quaternion}. The conjugate is an automorphism on $\mathbb{H}$, given by $q \mapsto \overline{q} = \Sc[q] - \VEC[q]$. The \emph{modulus} $|q|$ of a quaternion is defined by
\begin{equation}\label{eq:mod}  |q| = \sqrt{q \cdot \overline{q}} = \sqrt{q_0^2 + q_1^2 + q_2^2 + q_3^2}.  \end{equation}
It also holds $\Sc |q| \leq |q|$ and $q \overline{q} = |q|^2$. The inverse of a non-zero quaternion $q$ is given by $\frac{\overline{q}}{|q|^2}$. The non-commutativity of the quaternions means that in general $p^{-1}q \neq q p^{-1}$, such that $\frac{q}{p}$ in general does not make sense. Nevertheless, for an easier reading we will use the latter notation from time to time but we will make clear what is meant by it in each case. The following properties also should be brought up
\[ \overline{p \cdot q} = \overline{q} \cdot \overline{p} \qquad \text{and} \qquad |p \cdot q|^2 = |p|^2 \cdot |q|^2.  \]
Although quaternions are non commutative, the \emph{cyclic multiplication} has to be mentioned, since it is going to be a useful tool.
\begin{lem}[Cyclic Multiplication \cite{Sprssig}]
For all $q,r,s \in \QH$ holds
\begin{equation}\label{eq:SC} \Sc[q  r  s] = \Sc[r s q] = \Sc[s q r]. \end{equation}
\end{lem}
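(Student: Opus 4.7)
The plan is to reduce the three-factor cyclic identity to the simpler two-factor commutativity of the scalar part, namely the identity
\[ \Sc[ab] = \Sc[ba] \qquad \text{for all } a,b \in \QH, \]
and then group the product in two different ways.

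First I would establish the two-factor lemma by decomposing each quaternion into its scalar and vector parts. Writing $a = a_0 + \VEC[a]$ and $b = b_0 + \VEC[b]$, the product expands as
\[ ab = a_0 b_0 + a_0 \VEC[b] + b_0 \VEC[a] + \VEC[a]\VEC[b], \]
and for pure quaternions the product decomposes as $\VEC[a]\VEC[b] = -\langle \VEC[a], \VEC[b]\rangle + \VEC[a]\times \VEC[b]$, where $\langle \cdot,\cdot\rangle$ and $\times$ denote the Euclidean inner product and cross product on $\mathbb{R}^3$. Taking scalar parts gives
\[ \Sc[ab] = a_0 b_0 - \langle \VEC[a], \VEC[b]\rangle, \]
which is manifestly symmetric in $a$ and $b$, proving $\Sc[ab] = \Sc[ba]$.

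For the three-factor statement, I would then treat $rs$ as a single quaternion and apply the two-factor lemma to the pair $(q, rs)$:
\[ \Sc[qrs] = \Sc[q(rs)] = \Sc[(rs)q] = \Sc[rsq]. \]
Grouping instead as $(qr)$ and $s$ and applying the lemma to $(qr, s)$ yields
\[ \Sc[qrs] = \Sc[(qr)s] = \Sc[s(qr)] = \Sc[sqr]. \]
Combining both chains gives the desired equality $\Sc[qrs] = \Sc[rsq] = \Sc[sqr]$.

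There is no real obstacle here: the only subtlety is that one must resist the temptation to simply ``commute one factor at a time,'' since quaternion multiplication is non-commutative and $\Sc[qr] \neq \Sc[rq]$ does not generalize term by term inside a triple product. The crucial point is that the two-factor identity must be applied to a \emph{single} grouped product, which is why I would first isolate and prove the $\Sc[ab]=\Sc[ba]$ lemma before touching the three-factor case.
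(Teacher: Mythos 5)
Your proof is correct. The paper itself offers no proof of this lemma --- it simply cites the result --- so there is nothing internal to compare against; your argument is a valid, self-contained derivation. The key reduction to the two-factor identity $\Sc[ab]=\Sc[ba]$ (proved via the decomposition $\Sc[ab]=a_0b_0-\langle \VEC[a],\VEC[b]\rangle$, which is symmetric) combined with associativity to regroup $qrs$ as $q(rs)$ and $(qr)s$ is exactly the standard route, and your closing caution is apt: the scalar part of a triple product is invariant only under \emph{cyclic} permutations, not arbitrary transpositions, which is why one must apply the two-factor identity to a single grouped block rather than swapping adjacent factors. One small wording quibble: in your final paragraph the phrase ``$\Sc[qr]\neq\Sc[rq]$ does not generalize'' reads as if the two-factor identity were false; what you mean (and what is true) is that the identity $\Sc[ab]=\Sc[ba]$ holds but does not license termwise transpositions inside a longer product.
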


\subsection{Function spaces}
In the following we use the space $L^2(\mathbb{R}^2,\mathbb{H})$, an immediate generalization of the Hilbert space of all square-integrable functions, which consists of all quaternion-valued functions $f: \mathbb{R}^2 \to \mathbb{H}$ with finite norm
\[ \|f\|_2 = \left(\int_{\mathbb{R}^2} |f(\qx)|^2 \dx^2 \qx \right)^{1/2} < \infty \]
where $\dx^2 \qx = \dx x_1 \dx x_2$ represents the usual Lebesgue measure in $\mathbb{R}^2$. The $L^2$-norm is induced by the symmetric real scalar product
\begin{equation}\label{eq:Sc}
\langle f, g \rangle = \frac{1}{2}\int_{\mathbb{R}^2} (g(\qx) \overline{f(\qx)} + f(\qx) \overline{g(\qx)})\dx^2 \qx = \Sc \int_{\mathbb{R}^2} f(\qx) \overline{g(\qx)} \dx^2 \qx \end{equation}
which makes the space a real linear space. 
\begin{rem}[\cite{Hitzer-QFT}]
It is also possible to define a quaternion-valued inner product on $L^2(\mathbb{R}^2,\mathbb{H})$ with
\[ (f,g) = \int_{\mathbb{R}^2} f(\qx) \overline{g(\qx)} \dx^2 \qx \]
in which case one obtains a \emph{left Hilbert module}.
One has to be careful with the properties, since for $p \in \mathbb{H}$ holds
\[ (p f, g) = p (f,g) \qquad \text{but} \qquad (f, g p) = (f, g) \overline{p}. \]
Both inner products lead to the same norm.  
\end{rem}

We denote the two-dimensional cube $[0,1]^2$ briefly by $Q$. Moreover we need the notion of a Wiener-Amalgam space in the quaternionic setting. The Wiener-Amalgam space comes from the periodization of functions, the main idea for the definition arises from a joint categorization of local and global behavior. 
\begin{defn}[Wiener-Amalgam space \cite{Grchenig2001}]\label{defn:wiener}
A function $f \in L^\infty(\mathbb{R}^2, \mathbb{H})$ belongs to the Wiener-Amalgam Space $W = W(\mathbb{R}^2, L^\infty)$ if
\[ \|f\|_{W} = \sum_{n \in \mathbb{Z}^d} \|f \cdot T_n \mathcal{X}_{Q}\|_{\infty} \]
is finite.
\end{defn}
The subspace of continuous functions is denoted by $W_0$.

\subsection{The windowed quaternionic Fourier transfom}
For the analysis of signals, especially for results on the phase of an image, the windowed quaternionic Fourier transform will be our main tool. It is useful to utilize two exponential factors, which can be put either on the same side of the signal (one sided) or they can sandwich the signal (two sided). Since we are interested in applications using the hypercomplex signal, we use the two sided version which has some more symmetric features than the one sided one. The reader should note that these two Fourier transforms differ, owing to the non-commutativity, when a quaternion-valued signal is analyzed.  
\begin{defn}[Two sided quaternionic Fourier transform (QFT) \cite{BalianLowforWQFT}]
The \emph{two sided quaternionic Fourier transform} of $f \in L^2(\mathbb{R}^2,\mathbb{H})$ is the function $\FT_q(f)\,:\, \mathbb{R}^2 \to \mathbb{H}$ defined by
\[ \FT_q(f)(\omega) = \widehat{f}(\omega) = \int_{\mathbb{R}^2} \exp(-2 \pi i x_1 \omega_1) f(\qx) \exp(-2 \pi j x_2 \omega_2) \dx^2 \qx. \]
\end{defn}

\begin{rem}[\cite{BalianLowforWQFT}]
We can reconstruct the signal $f$ from its QFT with
\begin{equation}\label{eq:REC} f(\qx) = \FT_q^{-1}(\FT_q f)(\qx) = \int_{\mathbb{R}^2} \exp(2 \pi i x_1 \omega_1) \widehat{f}(\omega) \exp(2 \pi j x_2 \omega_2) \dx^2 \omega. \end{equation}
\end{rem}

There exists also a Parseval Theorem for the QFT, which is given in the following Theorem.  
\begin{thm}[Parseval Theorem \cite{BalianLowforWQFT}]
For $f, g \in L^2(\mathbb{R}^2,\mathbb{H})$ holds
\[ \langle f, g \rangle = \langle \widehat{f}, \widehat{g} \rangle. \]
\end{thm}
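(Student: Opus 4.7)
The plan is to work from the spatial inner product toward the frequency one by applying the QFT inversion formula \eqref{eq:REC} to $f$ inside $\langle f,g\rangle = \Sc\int_{\mathbb{R}^2} f(\qx)\overline{g(\qx)}\dx^2\qx$. Since $\Sc$ is real-linear and continuous, it commutes with Lebesgue integration; after pulling it outside and swapping the $\qx$ and $\omega$ integrations by Fubini, the problem reduces to rearranging the integrand
\[ \Sc\bigl[e^{2\pi i x_1\omega_1}\,\widehat f(\omega)\,e^{2\pi j x_2\omega_2}\,\overline{g(\qx)}\bigr] \]
so that the two kernels reassemble into $\overline{\widehat g(\omega)}$.

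The key observation is that $\overline{pq}=\overline{q}\,\overline{p}$ forces the order of the kernels to \emph{reverse} under conjugation, giving
\[ \overline{\widehat g(\omega)} = \int_{\mathbb{R}^2} e^{2\pi j y_2\omega_2}\,\overline{g(\qy)}\,e^{2\pi i y_1\omega_1}\dx^2\qy. \]
To match this pattern I would apply the cyclic multiplication identity \eqref{eq:SC} to move the leftmost $i$-kernel around to the right end of the four-factor product, turning the integrand into $\Sc\bigl[\widehat f(\omega)\,e^{2\pi j x_2\omega_2}\,\overline{g(\qx)}\,e^{2\pi i x_1\omega_1}\bigr]$. Performing the $\qx$-integral inside $\Sc$ then reproduces exactly the formula above for $\overline{\widehat g(\omega)}$, yielding $\Sc\int\widehat f(\omega)\overline{\widehat g(\omega)}\dx^2\omega=\langle\widehat f,\widehat g\rangle$.

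The main obstacle is the non-commutativity: the usual scalar Parseval argument slides complex exponentials past functions freely, which is no longer allowed because $i$ and $j$ must pass through the quaternion-valued factors $\widehat f(\omega)$ and $\overline{g(\qx)}$. The cyclic identity \eqref{eq:SC} rescues the argument, but only at the cost of projecting to the scalar part; this is precisely why the statement is formulated for the real-symmetric inner product \eqref{eq:Sc} rather than for a naive quaternion-valued bracket, where no analogous identity holds in general. A secondary technical point is justifying the inversion step and Fubini for arbitrary $L^2$ data: I would first establish the identity on a dense subspace such as the Schwartz class, where all integrals converge absolutely and the manipulations above are rigorous, and then extend to $L^2(\mathbb{R}^2,\QH)$ by continuity of the QFT, which follows componentwise from the classical real $L^2$ Fourier theory.
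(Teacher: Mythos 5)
Your proposal is correct and follows essentially the same route as the paper: apply the inversion formula \eqref{eq:REC} to $f$, use the cyclic identity \eqref{eq:SC} to rotate the leftmost $i$-kernel to the right end of the product, and recognize the resulting $\qx$-integral as $\overline{\widehat g(\omega)}$ via $\overline{pq}=\overline{q}\,\overline{p}$. The only difference is that you additionally flag the density/Fubini justification, which the paper passes over silently; that is a welcome extra but not a change of method.
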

\begin{proof}
We use the reconstruction formula \eqref{eq:REC} and the cyclic multiplication property \eqref{eq:SC}
\begin{align*}
\langle f, g \rangle 
&= \Sc \int_{\mathbb{R}^2} f(\qx) \overline{g(\qx)} \dx^2 \qx\\
&= \Sc \int_{\mathbb{R}^2} \int_{\mathbb{R}^2} \exp(2 \pi i x_1 \omega_1) \widehat{f}(\omega) \exp(2 \pi j x_2 \omega_2) \dx^2 \omega \,\overline{g(\qx)} \dx^2 \qx\\
&= \Sc \int_{\mathbb{R}^2} \widehat{f}(\omega) \int_{\mathbb{R}^2} \exp(2 \pi j x_2 \omega_2) \overline{g(\qx)} \exp(2 \pi i x_1 \omega_1) \dx^2 \omega \dx^2 \qx\\
&= \Sc \int_{\mathbb{R}^2} \widehat{f}(\omega) \int_{\mathbb{R}^2} \overline{\exp(- 2 \pi i x_1 \omega_1) g(\qx) \exp(- 2 \pi j x_2 \omega_2)} \dx^2 \qx \dx^2 \omega\\
&= \Sc \int_{\mathbb{R}^2} \widehat{f}(\omega) \overline{\widehat{g}(\omega)} \dx^2 \omega \\
&= \langle \widehat{f}, \widehat{g} \rangle. 
\end{align*}
\end{proof}
The cyclic multiplication is essential for the proof. So far there is no Parseval Theorem for the quaternion-valued inner product. As a simple corollary we obtain the \emph{Plancherel formula}, which states that the energy of a signal is preserved under the quaternionic Fourier transform
\begin{equation}\label{eq:ParsevalQ} \|f\|^2_2 = \|\widehat{f}\|^2_2.\end{equation}
Nevertheless it is in general not possible to obtain local information of the frequency behavior of the signal $f$ only by looking at $\widehat{f}$. In order to obtain local information of the frequency behavior, there are several possibilities. One could be to cut the signal into pieces and then apply the QFT. Another idea is to multiply the signal with a real-valued function of compact support and then apply the QFT. The first method leads into the Gibbs-phenomena, therefore we use the latter.  
\begin{defn}[Windowed quaternionic Fourier transform \cite{BalianLowforWQFT}]
The windowed quaternionic Fourier transform (WQFT) of $f \in L^2(\mathbb{R}^2, \mathbb{H})$ with respect to a non-zero real-valued window function $g \in L^2(\mathbb{R}^2, \mathbb{R})$ is defined as
\begin{equation}\label{eq:WQFT} \QT_g f(\qb, \omega) = \int_{\mathbb{R}^2} \exp(-2 \pi i x_1 \omega_1) f(\qx) g(\qx-\qb) \exp(-2 \pi j x_2 \omega_2) \dx^2 \qx.
\end{equation}
\end{defn}
We can reconstruct the signal from its WQFT.
\begin{prop}[Reconstruction formula \cite{BalianLowforWQFT}]
Let $g \in L^2(\mathbb{R}^2, \mathbb{R})$ be a non-zero real valued window function. Then every $f \in L^2(\mathbb{R}^2,\mathbb{H})$ can be fully reconstructed by
\begin{align}\label{eq:RecWQFT}
f(\qx) = \frac{1}{\|g\|^2_2}\int_{\mathbb{R}^2}\int_{\mathbb{R}^2} \exp(2 \pi i x_1 \omega_1) \QT_g f(\qb, \omega)g(\qx - \qb) \exp(2 \pi j x_2 \omega_2) \dx ^2 \omega \dx^2 \qb.
\end{align}
\end{prop}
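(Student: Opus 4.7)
The plan is to reduce the reconstruction formula for the WQFT to the already established inverse formula \eqref{eq:REC} for the plain QFT, exploiting the fact that the window $g$ is real-valued and therefore commutes with both quaternionic exponential kernels and with the signal $f(\qx)$.

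First I would fix $\qb \in \mathbb{R}^2$ and observe that, since $g(\qx-\qb) \in \mathbb{R}$, it can be freely shifted past $f(\qx)$ and the kernels $\exp(-2\pi i x_1 \omega_1)$, $\exp(-2\pi j x_2 \omega_2)$ in the defining integral \eqref{eq:WQFT}. Consequently, for each fixed $\qb$, the WQFT satisfies
\[
\QT_g f(\qb,\omega) = \FT_q\bigl[ f(\cdot)\, g(\cdot-\qb)\bigr](\omega).
\]
Because $f(\cdot)g(\cdot-\qb) \in L^2(\mathbb{R}^2,\mathbb{H})$ (by H\"older, $f\in L^2$ and $g$ bounded on the support after translation, or more carefully by density of nice functions), the inverse formula \eqref{eq:REC} applies and yields
\[
f(\qx)\,g(\qx-\qb) = \int_{\mathbb{R}^2}\exp(2\pi i x_1 \omega_1)\,\QT_g f(\qb,\omega)\,\exp(2\pi j x_2 \omega_2)\dx^2\omega.
\]

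Next I would multiply both sides by the real-valued factor $g(\qx-\qb)$ (placed on any side, since it is real) and integrate with respect to $\qb$ over $\mathbb{R}^2$. On the left-hand side the factor $g(\qx-\qb)^2$ appears, and by translation invariance of the Lebesgue measure
\[
\int_{\mathbb{R}^2} g(\qx-\qb)^2\dx^2\qb = \|g\|_2^2,
\]
so the left-hand side collapses to $\|g\|_2^2\,f(\qx)$. On the right-hand side, Fubini's theorem allows interchanging the $\qb$- and $\omega$-integrals, reproducing exactly the double integral of the claimed formula. Dividing by $\|g\|_2^2$ finishes the proof.

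The main technical obstacle is justifying the use of Fubini and the pointwise application of the inverse QFT. In the non-quaternionic theory this is handled by first proving the formula on a dense class of functions (Schwartz functions, or $f,g$ with compactly supported Fourier transforms), where all integrals are absolutely convergent and interchange is trivial, and then extending to all of $L^2$ by continuity using the Plancherel identity \eqref{eq:ParsevalQ}. The quaternionic setting poses no new difficulty here provided one is careful about the order of factors: since $g$ is real, the non-commutativity plays no role in the manipulations above, and the scalar-valued integrability estimates carry over unchanged from the classical case.
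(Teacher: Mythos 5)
The paper offers no proof of this proposition --- it is imported verbatim from \cite{BalianLowforWQFT} --- so there is no internal argument to compare against; I evaluate your proposal on its own. Your reduction to the QFT inversion formula \eqref{eq:REC} is the natural route and the algebra is sound: because $g$ is real-valued it commutes with $f$ and with both exponential kernels, so $\QT_g f(\qb,\cdot)=\FT_q\left[f\,g(\cdot-\qb)\right]$, and right-multiplying the inverted identity by $g(\qx-\qb)$, sliding that real factor inside the $\omega$-integral to sit between $\QT_g f(\qb,\omega)$ and the $j$-kernel, and integrating in $\qb$ produces $\|g\|_2^2\,f(\qx)$ on the left and exactly the double integral of \eqref{eq:RecWQFT} on the right. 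The non-commutativity really is harmless here, for precisely the reason you give.

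One step is misstated, though you partly self-correct it. For $f,g\in L^2$ the product $f(\cdot)\,g(\cdot-\qb)$ lies in $L^1$ by Cauchy--Schwarz, not in $L^2$: an $L^2$ window need not be bounded, so the parenthetical H\"older justification is wrong, and \eqref{eq:REC}, which is formulated for $L^2$ signals, does not apply directly. The density argument you sketch at the end is the correct repair, but it needs one more ingredient to close: the double integral in \eqref{eq:RecWQFT} is not absolutely convergent in general, so ``extending by continuity'' only makes sense if you first say in what topology the right-hand side is read. Two standard ways to finish are (i) interpret the reconstruction weakly, pairing both sides with an arbitrary $h\in L^2(\mathbb{R}^2,\mathbb{H})$ under the scalar product \eqref{eq:Sc} and invoking an orthogonality relation of the form $\langle \QT_g f,\QT_g h\rangle=\|g\|_2^2\,\langle f,h\rangle$, which follows from the Parseval theorem of Section 2 by the same computation as the unitarity lemma for the Gabor transform; or (ii) restrict to $g\in L^2\cap L^\infty$ --- which covers the Gaussian actually used throughout the paper --- where $f\,g(\cdot-\qb)$ genuinely is in $L^1\cap L^2$ and your pointwise argument goes through verbatim. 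With either of these additions the proof is complete.
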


By looking at the definition of the WQFT \eqref{eq:WQFT}, we can see that the signal $f$ stays untouched. The translation and modulation work on the window $g$, which can be interpreted as a $L^2$-kernel. This gives the first motivation to rewrite the WQFT with the help of an inner product. The problem is that we have to find a way to express the kernel. In order to do that, we use the concept of carriers, introduced by Shapiro et al. \cite{Shapiro}.

\begin{defn}[Carrier \cite{Zakquat, Shapiro}]
We define for two quaternions $p,q \in \mathbb{H}$ the \emph{right} $C_r$ and \emph{left} $C_l$ \emph{carrier operator}
\[ C_r(p)q = q p \qquad \text{and} \qquad q C_l(p) = p q. \] 
\end{defn}

\begin{lem}[Properties of the carrier]
We have for the carriers the following properties with $p \in \mathbb{H}$ \\
(a)\ $\overline{C_r(p)} = C_l(\overline{p})$ \  and \  $\overline{C_l(p)} = C_r(\overline{p})$,\\
(b)\ $C_r(p)1 =  1C_l(p) = p$.
%(c)\ $C_r(p) 1 C_l(q) = q p$ \  and \  $1 C_l(p) C_r(q) 1 = p q$,\\
%(d)\ $C_r(\exp(m i)) C_r(\exp(n i)) = \exp((m+n)i).$
\end{lem}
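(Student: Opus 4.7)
The plan is to prove both parts by unwinding the definitions of the carrier operators and combining them with the elementary quaternion identities $\overline{p\cdot q}=\overline{q}\cdot\overline{p}$ and $1\cdot p=p\cdot 1=p$. Neither part involves any nontrivial structure beyond this, so the proof reduces to a short direct check once the correct interpretation of the conjugate operator is fixed.

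For part (a), I would first clarify that the identity $\overline{C_r(p)}=C_l(\overline{p})$ is meant at the level of operators acting on $\QH$, in the sense that $\overline{C_r(p)q}=C_l(\overline{p})\overline{q}$ holds for every $q\in\QH$. To verify this, I apply the defining relation $C_r(p)q=qp$ and then the conjugation rule, obtaining
\[
\overline{C_r(p)q}=\overline{qp}=\overline{p}\,\overline{q}=C_l(\overline{p})\overline{q},
\]
which is the claimed identity. The second identity $\overline{C_l(p)}=C_r(\overline{p})$ follows by the same computation with the roles of left and right multiplication exchanged, namely $\overline{C_l(p)q}=\overline{pq}=\overline{q}\,\overline{p}=C_r(\overline{p})\overline{q}$.

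For part (b), I would simply evaluate the two sides against the unit element. By definition $C_r(p)\cdot 1=1\cdot p=p$, and likewise $1\cdot C_l(p)=p\cdot 1=p$, so both expressions coincide with $p$.

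Since every manipulation is either the definition of a carrier or one of the two basic quaternionic identities recalled at the beginning of the preliminaries, there is no real obstacle. The only point requiring mild care is to state explicitly, before starting the computation in (a), how the conjugate of an operator is to be understood; with that convention in place, the whole lemma collapses to a one-line verification for each item.
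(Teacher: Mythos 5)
Your verification is correct and is exactly the routine unwinding of the definitions that the paper leaves implicit (it states this lemma without proof). The only cosmetic point is that the paper writes the left carrier to the right of its argument, i.e.\ $\overline{q}\,C_l(\overline{p}) = \overline{p}\,\overline{q}$, so your expressions $C_l(\overline{p})\overline{q}$ should be read in that notation; the underlying identities $\overline{qp}=\overline{p}\,\overline{q}$ and $1\cdot p = p\cdot 1 = p$ are applied correctly.
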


Now that we are equipped with the necessary tools we can generalize the translation and modulation for our setting.
\begin{defn}
For $\qb, \omega \in \mathbb{R}^2$ we define the following operators
\[ T_{\qb} g(\qx) = g(\qx - \qb) \] and
\[ M_\omega g(\qx)  = \exp(2 \pi j \omega_2 x_2) g(\qx) C_r(\exp(2 \pi i \omega_1 x_1)). \]
\end{defn}
This allows us to write the WQFT in the following manner
\[ \QT_g f(\qb, \omega) = (f, M_\omega T_{\qb} g). \]
For the rest of this paper we employ the two-dimensional Gaussian function as window function, since it minimizes the uncertainty principle, i.e. it has the best localization in space and frequency. For shorter notation we use 
\[ \qe_{\lambda}(\qx) = M_\omega T_{\qb} \exp(- \pi |\qx|^2) = \exp(2 \pi j x_2 \omega_2) \exp(- \pi |\qx - \qb|^2) C_r(\exp(2 \pi i x_1 \omega_1)) \]
and
\[ \qe'_{\lambda}(\qx) = \overline{M_{-\omega}} T_{\qb} \exp(- \pi |\qx|^2) = C_l(\exp(2 \pi i x_1 \omega_1)) \exp(- \pi |\qx - \qb|^2) \exp(2 \pi j x_2 \omega_2) \]
with $\lambda = (\qb, \omega)$ being a four-dimensional point in $\mathbb{R}^2 \times \mathbb{R}^2$.

\section{Gabor methods}
In this section we will discuss necessary properties of the Gabor transform introduced in the previous section. Let us start with the continuous Gabor transform. 
\subsection{Gabor transform}
With the reconstruction formula \eqref{eq:RecWQFT} for the WQFT, we get for the special case $g(\qx) = \exp(- \pi |\qx|^2)$ the following expression
\[ f(\qx) = \int_{\mathbb{R}^2} \int_{\mathbb{R}^2} \exp(2 \pi i x_1 \omega_1) (f, M_\omega T_\qb g) g(\qx - \qb) \exp(2 \pi j x_2 \omega_2) \dx^2 \omega \dx^2 \qb = \int_{\mathbb{R}^2} \int_{\mathbb{R}^2} (f, \qe_{\lambda}) \qe'_{\lambda} \dx \lambda. \]
Since every quaternionic function can be split into a sum of real functions, we get the explicit formula:
\begin{align*}
f(\qx) &= \int_{\mathbb{R}^2} \int_{\mathbb{R}^2} \exp(2 \pi i x_1 \omega_1) (f, \qe_\lambda) \exp(-  \pi |\qx - \qb|^2) \exp(2 \pi j x_2 \omega_2) \dx^2 \omega \dx^2 \qb \\ 
&= \int_{\mathbb{R}^2} \int_{\mathbb{R}^2} \exp(2 \pi i x_1 \omega_1) \left[ \langle f, \qe_\lambda \rangle + i \langle - i f, \qe_\lambda \rangle + \langle - j f, \qe_\lambda \rangle  j + i \langle - k f , \qe_\lambda \rangle j \right]\\
& \qquad \qquad \exp(- \pi | \qx - \qb|^2) \exp(2 \pi j x_2 \omega_2) \dx^2 \omega \dx^2 \qb. 
\end{align*}
This formula helps us in the next lemma. For $f \in L^2(\mathbb{R}^2, \mathbb{H})$ the function $\lambda \mapsto (f, \qe_\lambda)$ defined in $\mathbb{R}^2 \times \mathbb{R}^2$ is called \emph{Gabor transform}. With the next Lemma we show that the Gabor transform is unitary.

\begin{lem}
For an arbitrary function $f \in L^2(\mathbb{R}^2, \mathbb{H})$ holds
\[ \int_{\mathbb{R}^2} \int_{\mathbb{R}^2}|\langle f, \qe_\lambda \rangle|^2 \dx^2 \qb \dx^2 \omega = \frac{1}{4} \|f\|^2_2 \]
and  
\[ \int_{\mathbb{R}^2} \int_{\mathbb{R}^2} \left| \int_{\mathbb{R}^2} \exp(-2 \pi i x_1 \omega_1) f(\qx) \exp(- \pi | \qx - \qb |^2) \exp(-2 \pi j x_2 \omega_2) \dx^2 \qx \right|^2 \dx^2 \qb \dx^2 \omega = \int_{\mathbb{R}^2} \int_{\mathbb{R}^2} |(f, \qe_\lambda)|^2 \dx^2 \lambda =  \|f\|_2^2. \]
\end{lem}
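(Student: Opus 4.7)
The overall strategy is to reduce both identities to Plancherel's theorem for the QFT (equation \eqref{eq:ParsevalQ}), applied to the windowed product $h_\qb(\qx) := f(\qx) g(\qx-\qb)$, where $g_\qb(\qx) = \exp(-\pi|\qx-\qb|^2)$.

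I would begin with the leftmost expression in the second line. Noting that $\QT_g f(\qb,\omega) = \widehat{h_\qb}(\omega)$, Plancherel immediately gives $\int_{\mathbb{R}^2}|\QT_g f(\qb,\omega)|^2 \dx^2\omega = \|h_\qb\|_2^2 = \int_{\mathbb{R}^2}|f(\qx)|^2 g_\qb(\qx)^2 \dx^2\qx$, since $g_\qb$ is real. Fubini and the translation invariance of Lebesgue measure then yield $\|f\|_2^2\|g\|_2^2$, which matches the claimed $\|f\|_2^2$ under the Gaussian normalization used in the paper.

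The middle equality $\int|(f,\qe_\lambda)|^2 \dx^2\lambda = \int|\QT_g f|^2 \dx^2\lambda$ is more subtle because the two integrands differ in the placement of $\exp(-2\pi i x_1\omega_1)$ (right versus left of $fg_\qb$), and non-commutativity forces their pointwise moduli to disagree. I would split $f = f_0 + if_1 + jf_2 + kf_3$ into real components, observe that for each real $f_l$ the two expressions coincide, and then expand both squared integrands in the orthogonal system of products of $\cos(2\pi x_1\omega_1),\sin(2\pi x_1\omega_1)$ and $\cos(2\pi x_2\omega_2),\sin(2\pi x_2\omega_2)$; the mixed cross-terms vanish under $\omega$-integration, leaving the same quantity on both sides.

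For the first identity, I would use the representation derived immediately before the lemma,
\[
(f,\qe_\lambda) = \langle f,\qe_\lambda\rangle + i\langle -if,\qe_\lambda\rangle + \langle -jf,\qe_\lambda\rangle j + i\langle -kf,\qe_\lambda\rangle j,
\]
whose four real brackets are the components of $(f,\qe_\lambda)$ in the orthonormal basis $\{1,i,j,k\}$ of $\mathbb{H}$. Hence $|(f,\qe_\lambda)|^2 = \sum_{l=0}^{3}|\langle \alpha_l f,\qe_\lambda\rangle|^2$ with $\alpha_l \in \{1,-i,-j,-k\}$. Integrating over $\lambda$, combining with the second identity, and using the isometry $\|\alpha_l f\|_2 = \|f\|_2$ valid for every unit quaternion $\alpha_l$, reduces the claim to showing that the four summands are equal; a symmetry argument exploiting the invariance of the Gaussian window under the unit-quaternion action then delivers the $\tfrac14$ factor.

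The main obstacle is the middle equality: because the $i$-exponential is placed asymmetrically around $fg_\qb$, no direct Plancherel step matches the two $L^2$ norms. The four-component decomposition together with orthogonality of sines and cosines over the frequency variable seems unavoidable here.
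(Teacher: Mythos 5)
Your two building blocks are the same as the paper's --- the quaternionic Plancherel theorem applied to $f_\qb(\qx)=f(\qx)\exp(-\pi|\qx-\qb|^2)$ for the $\omega$-integration followed by Fubini in $\qb$, and the four-real-bracket decomposition $(f,\qe_\lambda)=\langle f,\qe_\lambda\rangle+i\langle -if,\qe_\lambda\rangle+\langle -jf,\qe_\lambda\rangle j+i\langle -kf,\qe_\lambda\rangle j$ --- but you run them in the opposite direction: the paper establishes the first identity directly and then sums the four brackets to obtain the second, whereas you establish the second and try to split it into four equal parts to obtain the first. One of your worries is unnecessary: by the carrier convention fixed before the lemma, $(f,\qe_\lambda)=(f,M_\omega T_\qb g)=\QT_g f(\qb,\omega)$ holds pointwise by construction, so the leftmost and middle expressions of the second display are literally the same integral and no sine/cosine expansion is needed there.

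The genuine gap is the final ``symmetry argument'' producing the factor $\tfrac14$. The identity $|(f,\qe_\lambda)|^2=\sum_{l}|\langle\alpha_l f,\qe_\lambda\rangle|^2$ is fine, but the equality of the four integrated summands is precisely the content of the first identity, and invariance of the Gaussian under the unit-quaternion action does not deliver it, because left multiplication by $i,j,k$ does not commute with the two-sided modulation $q\mapsto e^{-2\pi i x_1\omega_1}\,q\,e^{-2\pi j x_2\omega_2}$. Concretely, writing $f=f_0+if_1+jf_2+kf_3$ and $c_\ell=\cos(2\pi x_\ell\omega_\ell)$, $s_\ell=\sin(2\pi x_\ell\omega_\ell)$, one computes
\begin{equation*}
\langle f,\qe_\lambda\rangle=\int_{\mathbb{R}^2}\bigl[c_1c_2\,f_0+s_1c_2\,f_1+c_1s_2\,f_2+s_1s_2\,f_3\bigr](\qx)\,e^{-\pi|\qx-\qb|^2}\dx^2\qx,
\end{equation*}
so $\langle f,\qe_\lambda\rangle$ extracts one fixed $\omega$-parity component from each $\widehat{f_\ell\, g_\qb}$. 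The $L^2_\omega$-mass of a single parity component depends on the parity structure of $f_\ell g_\qb$ and not only on $\|f_\ell\|_2$: the reflection correlations $\int f_\ell(\qx)f_\ell(\tau\qx)g_\qb(\qx)g_\qb(\tau\qx)\dx^2\qx$ do not cancel for general $f$, even after integrating in $\qb$. So the equal split must be proved, not invoked. (The paper sidesteps this by evaluating $\int|\langle f,\qe_\lambda\rangle|^2\dx^2\omega$ directly via Plancherel, a step that silently replaces $|\Sc(\cdot)|^2$ by $\Sc|\cdot|^2$ and therefore also deserves scrutiny.) Finally, you are right to flag the normalization, but deferring to the paper does not resolve it: $\int_{\mathbb{R}^2}e^{-2\pi|\qy|^2}\dx^2\qy=\tfrac12$, so the constants $\tfrac14$ and $1$ have to be accounted for explicitly rather than absorbed into $\|g\|_2^2$.
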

\begin{proof}
Since 
\begin{align}
&\int_{\mathbb{R}^2} |\langle f, \qe_\lambda \rangle|^2 \dx^2 \omega \nonumber \\
&= \int_{\mathbb{R}^2} \Sc \left|  \int_{\mathbb{R}^2} \exp(2 \pi i x_1 \omega_1) f(\qx) \exp(- \pi |\qx - \qb|^2) \exp(2 \pi j x_2 \omega_2) \dx^2 \qx \right|^2 \dx^2 \omega \nonumber \\
&= \int_{\mathbb{R}^2} \Sc \left| \FT_q (f_\qb)(-\omega) \right|^2 \dx^2 \omega \label{eq:Plancherel2} \\
&= \int_{\mathbb{R}^2} \Sc \left| f_\qb(\omega) \right|^2 \dx^2 \qx \label{eq:fqb}
\end{align}
with $f_\qb(\qx) = f(\qx) \exp(- \pi |\qx - \qb|^2)$. We used Plancherel Theorem \eqref{eq:ParsevalQ} in \eqref{eq:Plancherel2}. Integrating \eqref{eq:fqb} over $\qb$ and changing variables $\qy = \qx - \qb$ leads to
\begin{align*}
& \int_{\mathbb{R}^2} \Sc \int_{\mathbb{R}^2} | f(\qx) \exp(- \pi |\qx - \qb|^2)|^2 \dx^2 \qx \dx^2 \qb \\
&= \int_{\mathbb{R}^2} \Sc \int_{\mathbb{R}^2} |f(\qx) \exp(- \pi |\qy|^2)|^2 \dx^2 \qx \dx^2 \qy\\
&= \int_{\mathbb{R}^2} |\exp(- \pi |\qy|^2)|^2 \dx^2 \qy \quad  \Sc \int_{\mathbb{R}^2} |f(\qx)|^2 \dx^2 \qx\\
&= \frac{1}{4} \|f\|_2^2.  
\end{align*} 
Finally we obtain
\[ (f, \qe_\lambda) = \langle f, \qe_\lambda \rangle + i \langle - i f, \qe_\lambda \rangle +  \langle - j f, \qe_\lambda \rangle j+ i \langle - k f, \qe_\lambda \rangle j \] 
to find
\[ \int_{\mathbb{R}^2} \int_{\mathbb{R}^2} (f, \qe_\lambda) \dx^2 \qb \dx^2 \omega = \|f\|^2_2. \] 
\end{proof}
Now we have to take a look at the discrete version of the Gabor transform.  

\subsection{Gabor series}
In applications we have to replace the integrals in \eqref{eq:RecWQFT} by sums, which results in a discretization that leads to lattices in the space-frequency-parameter. In the following we define a lattice where each cell has the volume $1$. We take some elements $\alpha, \beta \in \mathbb{R}^+$ such that $\alpha \beta = 1$ and consider the lattice $\Lambda \subseteq \mathbb{R}^2 \times \mathbb{R}^2$ generated by $\alpha$ and $\beta$
\[ \Lambda = \{ \lambda = \alpha \qm + \beta \qn, \quad \qm, \qn \in \mathbb{Z}^2\}. \] 
Introducing a coordinate $\qb \in \mathbb{R}^2$ such that $\qb(\alpha) = 1$, then we have for the dual coordinate $\omega(\beta) = 1,  \omega \in \mathbb{R}^2$ in the frequency plane. This definition gives a lattice where the volume of a cell equals $1$, the so-called \emph{critical density.} The next proposition shows, that it is possible to reconstruct a quaternionic-valued signal with the help of shifted and modulated Gaussian functions in that lattice. 
\begin{prop}\label{prop:31}
For an arbitrary quaternion-valued sequence $\{c_\lambda \} \in \ell^2(\Lambda)$ with $\lambda = (\qb, \omega)$ the series
\begin{equation}\label{eq:Gaborseries} f(\qx) = \sum_{\lambda \in \Lambda} \exp(2 \pi i x_1 \omega_1) c_\lambda \exp(- \pi |\qx - \qb|^2) \exp(2 \pi j x_2 \omega_2) = \sum_{\lambda \in \Lambda} c_\lambda \qe'_\lambda(\qx) \end{equation}
converges in $L_2$ and the following inequality holds
\begin{align} \|f\|_2 \leq \sigma_0^2 \left( \sum_{\lambda_j = (b_2, \omega_2)} |c_{\lambda_j}|^2 \right)^{1/2} \left( \sum_{\lambda_i = (b_1, \omega_1)} |c_{\lambda_i}|^2 \right)^{1/2}, \end{align}
where $\sigma_0 = \sum_{n \in \mathbb{Z}} \exp \left(- \frac{\pi n^2}{2} \right)$.
\end{prop}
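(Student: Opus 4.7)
The plan is to reduce the 2D quaternionic Bessel-type bound to the classical one-dimensional Bessel inequality for a Gaussian-windowed Gabor system at critical density, applied in each variable separately. The subtlety is bookkeeping the non-commutativity of the units $i$ and $j$ appearing in the two modulation factors.

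First I would exploit the product structure: since $\alpha\beta = 1$, the lattice $\Lambda$ is a direct product of two 1D critical lattices, and the Gaussian $\exp(-\pi|\qx-\qb|^2)$ factors as $\exp(-\pi(x_1-\alpha m_1)^2)\exp(-\pi(x_2-\alpha m_2)^2)$. Because the Gaussian is real and commutes with everything, I can rewrite $\qe'_\lambda(\qx) = g^{(1)}_{m_1,n_1}(x_1)\,g^{(2)}_{m_2,n_2}(x_2)$, where $g^{(1)}_{m,n}(x) = \exp(-\pi(x-\alpha m)^2)\exp(2\pi i \beta n x)$ lies in the commutative complex subfield generated by $i$, and $g^{(2)}_{m,n}(x)$ is the analogous atom in the subfield generated by $j$.

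Next, to deal with non-commutativity I would split each coefficient as $c_\lambda = c^+_\lambda + c^-_\lambda j$ with $c^\pm_\lambda$ in the subfield generated by $i$, and use the identity $jz = \bar z j$ (valid for any $z$ in the $i$-subfield, since $ij=-ji$) to push $j$ past the first modulation. This yields $f = A + B\cdot j$, where $A$ and $B$ each have scalar-like coefficients sitting to the left of an $i$-atom in $x_1$ (the sign of $\omega_1$ is flipped in $B$) times a $j$-atom in $x_2$. A short computation using the cyclic property \eqref{eq:SC} to estimate the cross term $\mathrm{Sc}[A j \overline{B}]$ by Cauchy--Schwarz shows $\|f\|_2 \leq \|A\|_2 + \|B\|_2$, so it suffices to bound $A$ and $B$ separately.

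Then I would apply the 1D Bessel inequality twice. For the Gaussian Gabor system at critical density, the Gramian entries decay like $\exp(-\pi(\Delta m)^2/2)\exp(-\pi(\Delta n)^2/2)$ up to unimodular phases, so Schur's test delivers a 1D Bessel constant proportional to $\sigma_0$. Grouping $A = \sum_{(m_2,n_2)} h_{m_2,n_2}(x_1)\,g^{(2)}_{m_2,n_2}(x_2)$ with $h_{m_2,n_2}(x_1) = \sum_{(m_1,n_1)} c^+_\lambda \, g^{(1)}_{m_1,n_1}(x_1)$, one applies the 1D estimate first to the inner sum (for each fixed $(m_2,n_2)$) and then again to the resulting expansion in the $j$-atoms in $x_2$. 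The two factors combine into $\sigma_0^2$, and a final Cauchy--Schwarz rearrangement across the $(m_1,n_1)$- and $(m_2,n_2)$-index groups produces the product form displayed in the proposition. Convergence in $L^2$ follows by applying the same Bessel estimate to tails of the $\ell^2$-sequence $(c_\lambda)$, showing that the partial sums form a Cauchy sequence.

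The main technical obstacle is precisely the non-commutativity: the quaternionic coefficients $c_\lambda$ do not commute with the modulation $\exp(2\pi i x_1 \omega_1)$, so a naive tensorisation of two 1D Bessel estimates fails. The splitting $c_\lambda = c^+_\lambda + c^-_\lambda j$ together with the relation $jz = \bar z j$, and the twin observations that the Gaussian is real while each modulation lives in its own commutative subfield, is the algebraic key that reduces the four-dimensional summation to two independent one-dimensional problems, each governed by the theta-function constant $\sigma_0$.
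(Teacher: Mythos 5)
Your proposal is correct in substance but organizes the argument quite differently from the paper. The paper works with the full four-dimensional quadratic form at once: it expands $\|f\|_2^2 = \Sc\sum_{\lambda,\mu}\int c_\lambda \qe'_\lambda \overline{c_\mu\qe'_\mu}$, computes the Gaussian integrals in $x_2$ and then $x_1$ explicitly (obtaining Gram entries that factor into a $j$-phase times $\exp(-\tfrac{\pi}{2}((b_2-d_2)^2+(\omega_2-\eta_2)^2))$ and the analogous $i$-factor), and then applies the discrete Young convolution inequality \eqref{eq:convo} with the lattice Gaussians $g_{\lambda_i},g_{\lambda_j}$ of \eqref{eq:gg}. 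You instead tensorize first: factor the atoms into a $\mathbb{C}_i$-atom in $x_1$ and a $\mathbb{C}_j$-atom in $x_2$, split $c_\lambda=c_\lambda^++c_\lambda^- j$ to move the coefficients into a position where they commute with the $x_1$-modulation, and then iterate the one-dimensional Schur/Bessel estimate. The engine is identical in both proofs --- Gaussian off-diagonal decay of the Gram matrix plus Schur's test, which is what Young's inequality amounts to here --- but your iterated 1D version is cleaner about the non-commutativity: the paper silently commutes $c_\lambda$ and $\overline{c_\mu}$ past the exponential factors in the step following \eqref{eq:innerj}, which is exactly the point your splitting (or, alternatively, a systematic use of the cyclicity of $\Sc$) is designed to justify. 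Your Fubini-type double application (inner sum in $(m_1,n_1)$ for fixed $(m_2,n_2)$, then outer sum) is valid because the left-coefficient 1D Bessel bound survives quaternionic coefficients after taking $\Sc$ and absolute values of the Gram entries.

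One step of your proposal does not go through as stated: the ``final Cauchy--Schwarz rearrangement'' cannot produce the displayed bound $\sigma_0^2\bigl(\sum_{\lambda_j}|c_{\lambda_j}|^2\bigr)^{1/2}\bigl(\sum_{\lambda_i}|c_{\lambda_i}|^2\bigr)^{1/2}$ for a general sequence in $\ell^2(\Lambda)$, because the quantities $c_{\lambda_i}$, $c_{\lambda_j}$ are only defined when the coefficients factor as $c_\lambda=c_{\lambda_i}c_{\lambda_j}$ over the two index groups. This is a defect of the statement itself (the paper's own proof introduces $c_{\lambda_i}c_{\lambda_j}$ without comment in the chain leading to \eqref{eq:z}); your method naturally delivers the coherent form $\|f\|_2\le \sigma_0^2\bigl(\sum_{\lambda\in\Lambda}|c_\lambda|^2\bigr)^{1/2}$ (up to the factor $\sqrt{2}$ from the splitting, which is compensated by the $1/\sqrt{2}$ the Gaussian overlap integrals actually carry), and that is the estimate the later sections actually use. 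You should state and prove that version rather than chase the product form.
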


\begin{proof}
For $\lambda = (\qb, \omega)$ and $\mu = (\qd, \eta)$ we have
\begin{align}
&\|f\|_2^2 \nonumber \\
&= \Sc \int_{\mathbb{R}^2} \sum_{\lambda \in \Lambda} c_{\lambda} \qe_{\lambda}'(\qx) \sum_{\mu \in \Lambda} \overline{c_{\mu} \qe_{\mu}'(\qx)} \dx^2 \qx \nonumber \\
&= \Sc \int_{\mathbb{R}^2} \sum_{\lambda \in \Lambda} \exp(2 \pi i x_1 \omega_1) c_\lambda \exp(- \pi |\qx - \qb|^2) \exp(2 \pi j x_2 \omega_2) \nonumber \\
&\qquad \qquad \sum_{\mu \in \Lambda} \exp(- 2 \pi j x_2 \eta_2) \exp(- \pi |\qx - \qd|^2) \overline{c_\mu} \exp(-2 \pi i x_1 \eta_1) \dx^2 \qx \nonumber \\
&= \Sc \int_{\mathbb{R}^2} \sum_{\lambda, \mu \in \Lambda} \exp(2 \pi i x_1 \omega_1) c_\lambda \exp(- \pi |\qx - \qb|^2) \exp(2 \pi j x_2( \omega_2 - \eta_2)) \exp(- \pi |\qx - \qd|^2) \overline{c_\mu} \exp(-2 \pi i x_1 \eta_1) \dx^2 \qx \label{eq:innerj}.
\end{align}
Computing the $x_2$-integral of \eqref{eq:innerj} gives
\begin{align}
&\int_{\mathbb{R}} \exp(- \pi |x_2 - b_2|^2) \exp(2 \pi j x_2 \omega_2) \exp(-2 \pi j x_2 \eta_2) \exp(- \pi |x_2 - d_2|^2) \dx x_2 \nonumber \\
&=\int_{\mathbb{R}} \exp(- 2 \pi x_2^2 + 2 \pi b_2 x_2 - \pi b_2^2 + 2 \pi d_2 x_2 - \pi d_2^2 + 2 \pi j x_2 (\omega_2 - \eta_2)) \dx x_2 \nonumber \\
&=\exp(- \pi (b_2^2 + d_2^2)) \int_{\mathbb{R}} \exp(- 2 \pi (x_2^2 - x_2( b_2 + d_2 + \omega_2 j - \eta_2 j))) \dx x_2 \nonumber\\
&=\exp(- \pi (b_2^2 + d_2^2) + \frac{\pi (b_2 + d_2 + \omega_2 j - \eta_2 j)^2}{2}) \nonumber\\
&=\exp(- \pi j (b_2 + d_2)(\omega_2 - \eta_2) - \frac{\pi}{2} ((b_2 - d_2)^2 + (\omega_2 - \eta_2)^2)) \nonumber\\
&= \exp(- \pi j (b_2 + d_2)(\omega_2 - \eta_2) - |\lambda_j - \mu_j|^2) \nonumber\\
&= \exp(- \pi j (b_2 - d_2)(\omega_2 - \eta_2) - |\lambda_j - \mu_j|^2) \label{eq:period}.
\end{align}
The last line \eqref{eq:period} holds true, since we have chosen our lattice in a manner that we can use periodicity conditions. We insert this in \eqref{eq:innerj} and get with the help of the cyclic multiplication rule \eqref{eq:SC}:
\begin{align} 
&\Sc \int_{\mathbb{R}} \sum_{\lambda, \mu \in \Lambda} \exp(2 \pi i x_1 \omega_1) c_\lambda \exp(- \pi (x_1 - b_1)^2)  \exp(- \pi j (b_2 - d_2)(\omega_2 - \eta_2) - \frac{\pi}{2} ((b_2 - d_2)^2 + (\omega_2 - \eta_2)^2))  \label{innerj2} \nonumber \\[1em]
&\qquad  \exp(- \pi (x_1 - d_1)^2) \overline{c_\mu} \exp(-2 \pi i x_1 \eta_1) \dx x_1 \nonumber \\[1em]
&= \Sc \int_{\mathbb{R}} \sum_{\lambda, \mu \in \Lambda} c_\lambda \exp(- \pi j (b_2 - d_2)(\omega_2 - \eta_2) - \frac{\pi}{2} ((b_2 - d_2)^2 + (\omega_2 - \eta_2)^2)) \overline{c_\mu} \nonumber  \\[1em]
&\qquad \exp(- \pi (x_1 - b_1)^2) \exp(- \pi (2 \pi i x_1 (\omega_1 - \eta_1))) \exp(- \pi (x_1 - d_1)^2) \dx x_1 \nonumber \\[1em]
&= \Sc \sum_{\lambda, \mu \in \Lambda} c_\lambda \exp(- \pi j (b_2 - d_2)(\omega_2 - \eta_2) - \frac{\pi}{2} ((b_2 - d_2)^2 + (\omega_2 - \eta_2)^2)) \overline{c_\mu} \nonumber \\[1em]
& \qquad \exp(- \pi i (b_1 + d_1)(\omega_1 - \eta_1) - \frac{\pi}{2}((b_1 - d_1)^2 + (\omega_2 - \eta_2)^2)). \nonumber
\end{align}
For the estimate we incorporate the discrete Young inequality for a convolution with $r = p = 2$ and $q = 1$:
\begin{align}\label{eq:convo}
\sum_{m \in \mathbb{Z}^4} \left| \sum_{n \in \mathbb{Z}^4} a_n b_{m - n} \right|^2 \leq \left( \sum_{n \in \mathbb{Z}^4} |a_n| \right)^2 \sum_{m \in \mathbb{Z}^4} |b_m|^2 
\end{align}
and the functions
\begin{align}\label{eq:gg}
g_{\lambda_j} = \exp(\pi j b_2 \omega_2 - \frac{\pi}{2} (b_2^2 + \omega_2^2))  \qquad g_{\lambda_i} = \exp(\pi i b_1 \omega_1 - \frac{\pi}{2} (b_1^2 + \omega_1^2)) 
\end{align}
for $\lambda_j = (b_2, \omega_2) \in \mathbb{R} \times \mathbb{R}$ and $\lambda_i = (b_1, \omega_1) \in \mathbb{R} \times \mathbb{R}$. 
Moreover, we have
\begin{align}\label{eq:ggg}
\sum_{\lambda_j} |g_{\lambda_j}| = \sum_{\lambda_i} |g_{\lambda_i}| = \sum_{n \in \mathbb{Z}} \exp(- \frac{\pi}{2} n^2) = \sigma_0^2.
\end{align}
From this we obtain for the norm estimate of $f$
\begin{align}
\|f\|^4_2 \nonumber &\leq \big| \sum_{\lambda, \mu \in \Lambda} c_{\lambda_i}{c_{\lambda_j}}  \exp(- \pi j (b_2 - d_2)(\omega_2 - \eta_2) - \frac{\pi}{2} ((b_2 - d_2)^2 + (\omega_2 - \eta_2)^2)) \overline{c_{\mu_j}} \nonumber \\
& \qquad \overline{c_{\mu_i}}  \exp(- \pi i (b_1 - d_1)(\omega_1 - \eta_1) - \frac{\pi}{2}((b_1 - d_1)^2 + (\omega_2 - \eta_2)^2)) \big|^2 \nonumber \\
&\leq \sum_{\lambda, \mu \in \Lambda} \big|c_{\lambda_j} \exp(- \pi j (b_2 - d_2)(\omega_2 - \eta_2)- \frac{\pi}{2}((b_2 - d_2)^2 + (\omega_2 - \eta_2)^2)) \overline{c_{\mu_j}}\big|^2 \nonumber\\
&\qquad \big|c_{\lambda_i} \overline{c_{\mu_i}} \exp(- \pi i (b_1 - d_1)(\omega_1 - \eta_1) - \frac{\pi}{2}((b_1 - d_1)^2 + (\omega_1 - \eta_1)^2))\big|^2 \nonumber\\
&\leq \big| \sum_{\lambda_j} \sum_{\mu_j} c_{\lambda_j} g_{\lambda_j - \mu_j} \overline{c_{\mu_j}} \big|^2 \big| \sum_{\lambda_i} \sum_{\mu_i} c_{\lambda_i} \overline{c_{\mu_i}} g_{\lambda_i - \mu_i} \big|^2 \nonumber\\
& \leq \sum_{\lambda_j} |c_{\lambda_j}|^2 \sum_{\lambda_j} \left| \sum_{\mu_j} g_{\lambda_j - \mu_j} \overline{c_{\mu_j}} \right|^2 \sum_{\lambda_i} |c_{\lambda_i}|^2 \sum_{\lambda_i} \left| \sum_{\mu_i}  \overline{c_{\mu_i}} g_{\lambda_i - \mu_i} \right|^2 \nonumber \\
&= \sum_{\lambda_j} |c_{\lambda_j}|^2 \left(\sum_{\lambda_j} |g_{\lambda_j}| \right)^2 \sum_{\mu_j} |c_{\mu_j}|^2 \sum_{\lambda_i} |c_{\lambda_i}|^2 \left( \sum_{\lambda_i} |g_{\lambda_i}| \right)^2 \sum_{\mu_i} |c_{\mu_i}|^2 \label{eq:z} \\
& = \sigma_0^8 \left( \sum_{\lambda_j} |c_{\lambda_j}|^2 \right)^2 \left( \sum_{\lambda_i} |c_{\lambda_i}|^2 \right)^2 \label{eq:z2}
\end{align}
In \eqref{eq:z} we use the convolution estimate \eqref{eq:convo}. For the estimate of \eqref{eq:z2} we use equation \eqref{eq:ggg}.\\
All this leads into
\[ \|f\|_2 \leq \sigma_0^2 \left( \sum_{\lambda_j} |c_{\lambda_j}|^2 \right)^{1/2} \left( \sum_{\lambda_i} |c_{\lambda_i}|^2 \right)^{1/2}. \]
\end{proof}

%\begin{defn}[Annihilation and Creation Operator]
%\begin{eqnarray*}
%\qa_l = \frac{1}{2 \pi} \partial_{x_2} + x_2 \qquad &\text{and}& \qquad \qa_r = \frac{1}{2 \pi} \partial_{x_1} + x_1 \\
%\qa_l^{+} = - \frac{1}{2 \pi} \partial_{x_2} + x_2 \qquad &\text{and}& \qquad \qa_r^{+} = - \frac{1}{2 \pi} \partial_{x_1} + x_1 
%\end{eqnarray*}
%\end{defn}
%We have with $\qe_\lambda = \exp(2 \pi j x_2 \omega_2) \exp(- \pi (x_2 - b_2)^2) \exp(- \pi (x_1 - b_1)^2) \exp(2 \pi i x_1 \omega_1)$ and $\lambda_j = b_2 + j \omega_2$, $\lambda_i = b_1 + i x_1$.
%\begin{align}
%&\qa_l \qe_\lambda \nonumber \\
%&= \frac{1}{2 \pi} (2 \pi j \omega_2) \exp(2 \pi j x_2 \omega_2) \exp(- \pi (x_2 - b_2)^2) \exp(- \pi (x_1 - b_1)^2) \exp(2 \pi i x_1 \omega_1) \nonumber \\
%& \quad + \frac{1}{2 \pi} \exp(2 \pi j x_2 \omega_2) (- 2 \pi x_2) \exp(- \pi (x_2 - b_2)^2) \exp(- \pi (x_1 - b_1)^2) \exp(2 \pi i x_1 \omega_1)\nonumber \\
%& \quad + \frac{1}{2 \pi} \exp(2 \pi j x_2 \omega_2) (2 \pi b_2) \exp(- \pi (x_2 - b_2)^2) \exp(- \pi (x_1 - b_1)^2) \exp(2 \pi i x_1 \omega_1)\nonumber \\
%& \quad + x_2 \qe_\lambda \nonumber\\
%&= j \omega_2 \qe_\lambda - x_2 \qe_\lambda + b_2 \qe_\lambda + x_2 \qe_\lambda \nonumber\\
%&= \lambda_j \qe_\lambda. \nonumber
%\end{align}
%And
%\begin{align*}
% \qe_\lambda \qa_r = \qe_\lambda \lambda_i 
%\end{align*}

\section{Quaternionic Zak transform}
% ICH BIN MIR SEHR UNSICHER OB ICH DIE ZAK TRASNFORMATION SO ODER MIT VERTAUSCHTEM I UND J AUFSCHREIBEN SOLL.
The quaternionic Zak transform is the natural extension of the Zak transform into the two-sided quaternionic setting. The classic Zak transform is useful in the analysis of the Gabor transform, of course, the extension keeps similar properties \cite{Zakquat, Grchenig2001}. In harmonic analysis the Zak transform is also known as Weil-Brezin map.  For our results we will invoke the quaternionic Zak transform several times.
\begin{defn}[Quaternionic Zak transform]
The \emph{quaternionic Zak transform} of a quaternion-valued function $f \in L^2(\mathbb{R}^2,\mathbb{H})$ is defined as
\[ \ZT_q f(\qx, \omega) = \sum_{\qm \in \mathbb{Z}^2} \exp(2 \pi i m_1 \omega_1) f(\qx - \qm) \exp(2 \pi j m_2 \omega_2) \qquad \qx, \omega \in \mathbb{R}^2. \]
\end{defn}

In what follows we show some interesting properties of the quaternionic Zak transform. 
\begin{prop}[Quasiperiodicity conditions]
The transform $\ZT_q$ fulfills
\[ \ZT_q f(\qx,\omega + 1) = \ZT_q f(\qx,\omega), \qquad \ZT_q f(\qx + 1, \omega) = \exp(2 \pi i  \omega_1) \ZT_q f(\qx, \omega) \exp(2 \pi j \omega_2). \]
\end{prop}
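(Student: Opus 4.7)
The plan is to derive both identities by direct manipulation of the defining series, exploiting the fact that in every summand of $\ZT_q f(\qx,\omega)$ the two exponentials already sit on the outside of the quaternion-valued factor $f(\qx-\qm)$, so non-commutativity never becomes an issue.

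For the $\omega$-periodicity I would work term by term. Replacing $\omega_1$ by $\omega_1+1$ multiplies the left exponential of the $\qm$-th summand by $\exp(2\pi i m_1)$, which equals $1$ since $m_1\in\mathbb{Z}$; the right exponential is unaffected. Replacing $\omega_2$ by $\omega_2+1$ is symmetric, using $\exp(2\pi j m_2)=1$. Thus $\ZT_q f(\qx,\omega+1) = \ZT_q f(\qx,\omega)$ holds summand by summand, and no convergence issue arises beyond the one already needed to make $\ZT_q f$ well-defined.

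For the quasi-periodicity in $\qx$ I would perform an index shift. Setting $\qn = \qm-(1,1)$ in the sum gives
\[
\ZT_q f(\qx+1,\omega) = \sum_{\qn\in\mathbb{Z}^2} \exp\bigl(2\pi i (n_1+1)\omega_1\bigr)\, f(\qx-\qn)\, \exp\bigl(2\pi j (n_2+1)\omega_2\bigr).
\]
I then split the outer exponentials, using that $\exp(2\pi i\omega_1)$ commutes with $\exp(2\pi i n_1\omega_1)$ (both lie in $\mathbb{R}+\mathbb{R}i$) and likewise on the right:
\[
\exp\bigl(2\pi i(n_1+1)\omega_1\bigr) = \exp(2\pi i\omega_1)\exp(2\pi i n_1\omega_1), \qquad \exp\bigl(2\pi j(n_2+1)\omega_2\bigr) = \exp(2\pi j n_2\omega_2)\exp(2\pi j\omega_2).
\]
Since $\exp(2\pi i\omega_1)$ now stands leftmost and $\exp(2\pi j\omega_2)$ rightmost in every summand, both constants factor out of the series without being commuted past any value of $f$, and what is left is exactly $\ZT_q f(\qx,\omega)$.

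I do not expect any real obstacle: the two-sided placement of the exponentials in the definition of $\ZT_q$ is precisely engineered so that the prefactor $\exp(2\pi i\omega_1)$ and the postfactor $\exp(2\pi j\omega_2)$ produced by the index shift end up on the correct sides. The only point worth emphasising in writing the proof is this observation, which is what makes the non-commutative setting harmless here.
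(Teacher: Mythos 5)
Your proof is correct: the paper states this proposition without any proof, and your direct verification (term-by-term periodicity in $\omega$ using $\exp(2\pi i m_1)=\exp(2\pi j m_2)=1$ for integer $m_1,m_2$, and the index shift $\qn=\qm-(1,1)$ for the $\qx$-quasiperiodicity) is exactly the routine computation the author evidently considered too standard to include. Your emphasis on why the two-sided placement of the exponentials lets the factors $\exp(2\pi i\omega_1)$ and $\exp(2\pi j\omega_2)$ emerge on the correct sides without ever commuting past $f$ is the one genuinely noteworthy point, and you have handled it properly.
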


The next remark concerns the zeros of the quaternionic Zak transform of a modulated and translated Gaussian window. 
\begin{rem}\label{rem:1}
We have for $\mu = (\qp, \theta)$:
\begin{align*}
&\ZT_q \qe'_{\mu}(\qx, \omega)\\[1ex]
& = \sum_{\qm \in \mathbb{Z}^2}\exp(2 \pi i m_1 \omega_1) \exp(2 \pi i \theta_1(x_1 - m_1))\exp(- \pi (x_1 - m_1 - p_1)^2)\\[1ex]
&\qquad \exp(- \pi (x_2 - m_2 - p_2)^2) \exp(2 \pi j \theta_2(x_2 - m_2))(\exp(2 \pi j m_2 \omega_2))\\[1ex]
& = \exp(2 \pi i \theta_1 x_1) \Big(\sum_{\qm \in \mathbb{Z}^2} \exp(2 \pi i (m_1 \omega_1 - \theta_1 m_1)) \exp(- \pi (x_2 - m_2 - p_2)^2)\\[1ex]
&\qquad \exp(- \pi (x_1 - m_1 - p_1)^2)\exp(2 \pi j (m_2 \omega_2 - \theta_2 m_2))\Big) \exp(2 \pi j \theta_2 x_2)\\[1ex]
& = \exp(2 \pi i \theta_1 x_1) \Big(\sum_{\qm' \in \mathbb{Z}^2} \exp(2 \pi i (m'_1 \omega_1 + p_1 \omega_1 - \theta_1 m'_1 - \theta_1 p_1)) \exp(- \pi (x_1 + m'_1)^2)\\[1ex]
&\qquad \exp(- \pi (x_2 + m'_2)^2) \exp(2 \pi j (m'_2 \omega_2 + p_2 \omega_2 - \theta_2 m'_2 - \theta_2 p_2))\Big)\exp(2 \pi j \theta_2 x_2)\\[1ex]
& = \exp(2 \pi i (\theta_1 x_1 + p_1 \omega_1))\exp(- \pi x_1^2) \Big(\sum_{\qm' \in \mathbb{Z}^2} \exp(2 \pi i m'_1 \omega_1 - 2 \pi x_1 m'_1 - \pi m'^1_2)\\[1ex]
&\qquad \exp(2 \pi j m'_2 \omega_2 - 2 \pi x_2 m'_2 - \pi m'^2_2)\Big)\exp(- \pi x_2^2)\exp(2 \pi j (\theta_2 x_2 + p_2 \omega_2))\\[1ex]
&= \exp(2 \pi i (\theta_1 x_1 + p_1 \omega_1))\exp(- \pi x_1^2) \Theta_i(\omega_1 + i x_1) \Theta_j (\omega_2 + j x_2)\exp(- \pi x_2^2)\exp(2 \pi j (\theta_2 x_2 + p_2 \omega_2))
\end{align*}
with the two Theta series
\begin{align*}
\Theta_i(z_1) = \sum_{m \in \mathbb{Z}} \exp(2 \pi i m z - \pi m^2) \qquad \text{and} \qquad \Theta_j(z_2) = \sum_{m \in \mathbb{Z}} \exp(2 \pi j m z - \pi m^2)   \qquad \text{with $z_1 \in \mathbb{C}_i, z_2 \in \mathbb{C}_j$}.
\end{align*}
\end{rem}

For the involved Theta series we have the following properties.
\begin{rem}\label{rem:rem2}
The Theta series are holomorphic, have a simple zero in the cube $Q$ only for $z_1 = 1/2 + i/2$ and $z_2 = 1/2 + j/2$, respectively \cite{Bellman1961}. Moreover, the Theta series satisfy
\[  \Theta_i(z_1+1) = \Theta_i(z_1), \qquad \qquad \Theta_i(z_1+i) =  \exp(\pi - 2 \pi i z_1) \Theta_i(z_1) \]
and
 \[  \Theta_j(z_2+1) = \Theta_j(z_2), \qquad \qquad \Theta_j(z_2+j) = \exp(\pi - 2 \pi j z_2) \Theta_j(z_2). \]
\end{rem}

We also have the following continuity property of the quaternionic Zak transform. 
\begin{lem}\label{lem:xx}
For any $f \in W_0$ the quaternionic Zak transform is continuous.
\end{lem}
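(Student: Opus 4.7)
The plan is to apply the Weierstrass M-test to the defining series, using the Wiener-amalgam norm as the dominating majorant, and then conclude that the uniform limit of continuous functions is continuous.

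First I would observe that for real arguments both quaternionic exponentials have modulus one, i.e.\ $|\exp(2\pi i m_1\omega_1)| = |\exp(2\pi j m_2\omega_2)| = 1$, and together with the multiplicativity of the modulus this gives the pointwise bound
\[
\bigl|\exp(2\pi i m_1 \omega_1)\, f(\qx - \qm)\, \exp(2\pi j m_2 \omega_2)\bigr| \;=\; |f(\qx - \qm)|
\]
uniformly in $\omega \in \mathbb{R}^2$. Thus the $\omega$-dependence plays no role in dominating the series; the bound reduces to controlling the translates of $f$.

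Next I would fix a compact set $K \subset \mathbb{R}^2$ and cover it by a finite union of unit translates of the cube $Q$, say $K \subset \bigcup_{\qn \in F} (Q + \qn)$ for some finite $F \subset \mathbb{Z}^2$. Then for every $\qx \in K$ and every $\qm \in \mathbb{Z}^2$, the point $\qx - \qm$ lies in some $Q + \qn - \qm$ with $\qn \in F$, so
\[
\sup_{\qx \in K} |f(\qx - \qm)| \;\leq\; \sum_{\qn \in F} \|f \cdot T_{\qm - \qn}\mathcal{X}_Q\|_\infty.
\]
Summing over $\qm$ and re-indexing, this is bounded by $|F| \cdot \|f\|_W$, which is finite because $f \in W_0 \subset W$. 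Therefore the series defining $\ZT_q f$ converges absolutely and uniformly on $K \times \mathbb{R}^2$.

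Finally, each partial sum is a finite $\mathbb{H}$-linear combination of products of the continuous function $f$ with continuous exponential kernels, hence jointly continuous in $(\qx,\omega)$. Since $K$ was arbitrary, the uniform limit $\ZT_q f$ is continuous on $\mathbb{R}^2 \times \mathbb{R}^2$. The only subtle point is bookkeeping the non-commutative order of factors when estimating moduli, but since every factor other than $f(\qx-\qm)$ has modulus one, the order is irrelevant and no genuine obstacle arises; this is in essence the same argument as in the commutative setting of \cite{Grchenig2001}.
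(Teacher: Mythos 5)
Your proof is correct and follows essentially the same route as the paper's: both arguments use the Wiener--Amalgam norm to control the tail of the defining series uniformly on compact sets (in $\qx$) and uniformly in $\omega$, and then deduce continuity from the continuity of the finite partial sums --- you package this as a Weierstrass M-test, while the paper phrases it as an explicit $\varepsilon/4$ tail estimate plus uniform continuity of the main term. The only blemish is the sign in your translate index ($T_{\qm-\qn}$ should be $T_{\qn-\qm}$), which is immaterial once you sum over all $\qm\in\mathbb{Z}^2$.
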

\begin{proof}
Given $\varepsilon > 0$, there exists a $N> 0$ such that\\
\[ \sum_{|\qk| > N} \|f \cdot T_{\qk} \mathcal{X}_{Q}\|_\infty < \frac{\varepsilon}{4}. \]
Then, the main term $\sum_{|\qk| \leq N} \exp(2 \pi i k_1 \omega_1) f(\qx -  \qk) \exp(2 \pi j k_2 \omega_2)$ is uniformly continuous on compact sets of $\mathbb{R}^4,$ and there exists a $\delta > 0$ such that
\[ \left| \sum_{|\qk| \leq N} \exp(2 \pi i k_1 \omega_1) f(\qx - \qk) \exp(2 \pi j k_2 \omega_2) - \sum_{|\qk| \leq N} \exp(2 \pi i k_1 \xi_1) f(\qy - \qk) \exp(2 \pi j k_2 \xi_2)\right| < \frac{\varepsilon}{2} \]
whenever $|\qx - \qy| + |\omega - \xi| < \delta.$ As a consequence, $|\ZT_q f(\qx, \omega) - \ZT_q f(\qy, \xi)| < \varepsilon$ and $\ZT_q f$ is continuous.
\end{proof}

Furthermore, the quaternionic Zak transform is a unitary mapping.
\begin{lem}
The quaternionic Zak transform is a unitary operator mapping $L^2(\mathbb{R}^2, \mathbb{H})$ onto $L^2(Q \times Q, \mathbb{H})$.
\end{lem}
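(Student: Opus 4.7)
The plan is to reduce the statement to a Parseval-type identity for the two-sided quaternionic Fourier series on the unit cube $Q$. For fixed $\qx \in Q$, the expression
\begin{equation*}
\ZT_q f(\qx, \omega) = \sum_{\qm \in \mathbb{Z}^2} \exp(2 \pi i m_1 \omega_1) f(\qx - \qm) \exp(2 \pi j m_2 \omega_2)
\end{equation*}
is itself a two-sided quaternionic Fourier expansion in $\omega \in Q$ with coefficient sequence $\{f(\qx - \qm)\}_{\qm \in \mathbb{Z}^2}$, and the whole argument hinges on orthogonality of the two-sided exponential system in $L^2(Q, \QH)$ with respect to the real scalar product \eqref{eq:Sc}.

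The main computational step is to verify
\begin{equation*}
\Sc \int_Q \exp(2 \pi i m_1 \omega_1) q \exp(2 \pi j m_2 \omega_2) \overline{\exp(2 \pi i n_1 \omega_1) r \exp(2 \pi j n_2 \omega_2)} \dx^2 \omega = \delta_{\qm, \qn} \Sc(q \overline{r})
\end{equation*}
for $q, r \in \QH$ and $\qm, \qn \in \mathbb{Z}^2$. Using $\overline{AB} = \overline{B} \cdot \overline{A}$ together with the cyclic property \eqref{eq:SC} to move the trailing factor $\exp(-2 \pi i n_1 \omega_1)$ to the front reduces the integrand to $\Sc[\exp(2 \pi i (m_1 - n_1) \omega_1) q \exp(2 \pi j (m_2 - n_2) \omega_2) \overline{r}]$. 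Because the left and right exponentials involve different imaginary units and the independent variables $\omega_1, \omega_2$, Fubini factors the integral over $Q = [0,1]^2$ into a product of two one-dimensional integrals, each of which vanishes unless the corresponding frequency difference is zero. Inserting this orthogonality into $\int_Q |\ZT_q f(\qx, \omega)|^2 \dx^2 \omega$ (truncating to compact support and extending by density where needed) yields
\begin{equation*}
\int_Q |\ZT_q f(\qx, \omega)|^2 \dx^2 \omega = \sum_{\qm \in \mathbb{Z}^2} |f(\qx - \qm)|^2,
\end{equation*}
and integrating in $\qx$ over $Q$ while using the disjoint tiling $\mathbb{R}^2 = \bigcup_\qm (Q + \qm)$ gives $\|\ZT_q f\|^2_{L^2(Q \times Q, \QH)} = \|f\|^2_{L^2(\mathbb{R}^2, \QH)}$, establishing the isometric property.

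For surjectivity, given $F \in L^2(Q \times Q, \QH)$ I would expand $F(\qx, \cdot)$ in the orthonormal system from the orthogonality step to obtain quaternion-valued coefficients
\begin{equation*}
c_\qm(\qx) = \int_Q \exp(-2 \pi i m_1 \omega_1) F(\qx, \omega) \exp(-2 \pi j m_2 \omega_2) \dx^2 \omega
\end{equation*}
satisfying $\sum_\qm \|c_\qm\|^2_{L^2(Q)} = \|F\|^2 < \infty$, then define $f \in L^2(\mathbb{R}^2, \QH)$ by $f(\qx - \qm) := c_\qm(\qx)$ for $\qx \in Q$, $\qm \in \mathbb{Z}^2$; the earlier isometry computation run in reverse shows $\ZT_q f = F$. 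The main obstacle is the orthogonality computation itself: non-commutativity of $\QH$ forbids a direct appeal to Fubini, and only after the cyclic rearrangement under $\Sc$ do the $i$- and $j$-sides decouple into one-dimensional scalar integrals. Once that step is in place, the remainder of the argument parallels the classical proof that the Zak transform is unitary from $L^2(\mathbb{R})$ onto $L^2(Q)$.
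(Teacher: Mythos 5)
Your argument is correct, and for the isometry half it is essentially the paper's argument in different clothing: the paper defines $F_\qm(\qx,\omega)=\exp(2\pi i m_1\omega_1)f(\qx-\qm)\exp(2\pi j m_2\omega_2)$ and proves $\langle F_\qm,F_\qn\rangle=0$ for $\qm\neq\qn$ by exactly the move you describe — conjugate, then use the cyclic property of $\Sc$ to merge the two $i$-exponentials — so your orthogonality relation $\Sc\int_Q \exp(2\pi i m_1\omega_1)q\exp(2\pi j m_2\omega_2)\overline{\exp(2\pi i n_1\omega_1)r\exp(2\pi j n_2\omega_2)}\dx^2\omega=\delta_{\qm,\qn}\Sc(q\overline{r})$ is the same computation with the $\qx$-integration stripped off. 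Where you genuinely diverge is surjectivity: the paper shows that $\ZT_q$ carries the Gabor orthonormal basis $\mathcal{X}_{\qm,\qn}=\exp(2\pi i m_1x_1)\mathcal{X}(\qx-\qn)\exp(2\pi j m_2 x_2)$ of $L^2(\mathbb{R}^2,\QH)$ onto the two-sided exponential basis of $L^2(Q\times Q,\QH)$ and invokes the ONB-to-ONB criterion, whereas you construct a preimage directly by expanding $F(\qx,\cdot)$ in $\omega$ and tiling the coefficients $c_\qm(\qx)$ over $Q+\qm$. Your route is more constructive (it exhibits $f=\ZT_q^{-1}F$ explicitly), while the paper's is shorter once one grants that the image system is a basis. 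Two points in your write-up deserve one more line each. First, the integral does not literally factor by Fubini, since $q$ sits between the two exponentials and $\overline{r}$ trails them; you must first expand $\exp(2\pi i a\omega_1)=\cos(2\pi a\omega_1)+i\sin(2\pi a\omega_1)$ and likewise for the $j$-side, so that $\Sc$ of the product splits into four real terms of the form $(\text{trig in }\omega_1)(\text{trig in }\omega_2)\,\Sc[i^{\epsilon}qj^{\delta}\overline{r}]$, each of which then factors and integrates to zero unless $\qm=\qn$. Second, your surjectivity step silently uses completeness of the two-sided system $\{\exp(2\pi i m_1\omega_1)\,q\,\exp(2\pi j m_2\omega_2)\}$ in the real Hilbert space $L^2(Q,\QH)$; this is true but needs the observation that grouping the four frequencies $(\pm m_1,\pm m_2)$ and letting $q$ vary recovers all sixteen real dimensions $(\text{trig})\cdot(\text{trig})\cdot\QH$ at each $|\qm|$ — the paper relies on the analogous unproved completeness claim for its image basis, so neither treatment is fully airtight here, but yours makes the dependence more visible.
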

\begin{proof}
We consider the functions $F_\qm(\qx,\omega) = \exp(2 \pi i m_1 \omega_1) f(\qx - \qm)(\exp(2 \pi j m_2 \omega_2))$ with $\qm \in \mathbb{Z}^2$. These functions belong to $L^2(Q \times Q,\mathbb{H})$. 
\begin{align*}
\sum_{\qm \in \mathbb{Z}^2} \|F_\qm\|^2_{L^2(Q \times Q)} &= \sum_{\qm \in \mathbb{Z}^2} \langle F_\qm, F_\qm \rangle\\
&= \sum_{\qm \in \mathbb{Z}^2} \Sc \int_{Q} \int_{Q} \exp(2 \pi i m_1 \omega_1) f(\qx - \qm) \exp(2 \pi j m_2 \omega_2)\\
&\qquad  \exp(-2 \pi j m_2 \omega_2)\overline{f(\qx - \qm)} \exp(- 2 \pi i m_1 \omega_1) \dx^2 \qx \dx^2 \omega\\
&= \sum_{\qm \in \mathbb{Z}^2} \Sc \int_{Q} \int_{Q} \exp(2 \pi i m_1 \omega_1)f(\qx - \qm) \exp(2 \pi j m_2 \omega_2)\\
&\qquad \exp(-2 \pi j m_2 \omega_2) \overline{f(\qx - \qm)} \exp(-2 \pi i m_1 \omega_1) \dx^2 \qx \dx^2 \omega\\
&= \sum_{\qm \in \mathbb{Z}^2} \int_{Q} \int_{Q}|f(\qx - \qm)|^2 \dx^2 \qx \dx^2 \omega\\
&= \alpha^{-2} \|f\|^2_2
\end{align*}
For $\qm \neq \qn$, we have with the cyclic multiplication \eqref{eq:SC}
\begin{align*}
\sum_{\qm,\qn \in \mathbb{Z}^2} \langle F_\qm, F_\qn \rangle
&= \sum_{\qm, \qn \in \mathbb{Z}^2} \Sc \int_{Q} \int_{Q} \exp(2 \pi i m_1 \omega_1) f(\qx - \qm) \exp(2 \pi j m_2 \omega_2)\\
&\qquad  \exp(-2\pi j n_2 \omega_2)\overline{f(\qx - \qn)}\exp(- 2 \pi i n_1 \omega_1) \dx^2 \qx \dx^2 \omega\\
&= \Sc \int_{Q} \int_{Q} \exp(2 \pi i m_1 \omega_1) f(\qx - \qm) \exp(2 \pi j (m_2 - n_2)\omega_2) f(\qx - \qn) \\
& \qquad \exp(- 2 \pi i n_1 \omega_1) \dx^2 \qx \dx^2 \omega\\
&= \Sc \int_{Q} \int_{Q} f(\qx -\qm) \exp(2 \pi j (m_2 - n_2)\omega_2)f(\qx - \qn) \exp(-2 \pi i n_1 \omega_1)\\
&\qquad \exp(2 \pi i m_1 \omega_1) \dx^2 \qx \dx^2 \omega\\
&= 0
\end{align*}
By combining the above results we get that $\sum_{\qm \in \mathbb{Z}^2} F_{\qm}$ converges in $L^2(Q \times Q, \mathbb{H})$ and
\[ \|\sum_{\qm \in \mathbb{Z}^2} F_{\qm} \|^2_{L^2(Q \times Q)} = \sum_{\qm \in \mathbb{Z}^2} \|F_\qm\|^2_{L^2(Q \times Q)} = \|f\|^2_2 \]
Thus $\ZT_q$ is an isometry from $L^2(\mathbb{R}^2,\mathbb{H})$ to $L^2(Q \times Q,\mathbb{H})$. For the rest of this proof we use the Gabor orthonormal basis
\[ \mathcal{X}_{\qm,\qn} = \exp(2 \pi i m_1 x_1) \mathcal{X}(\qx - \qn) \exp(2 \pi j m_2 x_2) \]
where $\mathcal{X}$ denotes the characteristic function of $Q$, defined by $\mathcal{X}(\qx) = 1$ for $\qx \in Q$ and $\mathcal{X}(\qx) = 0$ otherwise. Direct calculation tells us that
\[ \ZT_q \mathcal{X}_{\qm,\qn}(\qx, \omega) = \exp(2 \pi i (m_1 x_1 - n_1 \omega_1))\ZT_q \mathcal{X} \exp(2 \pi j (m_2 x_2 - n_2 \omega_2)) \]
and $\ZT_q \mathcal{X} = 1$ with $(\qx, \omega) \in Q \times Q$. Since $\ZT_q \mathcal{X}_{\qm,\qn}$ forms an orthonormal basis with the scalar inner product \eqref{eq:Sc}, $\ZT_q$ maps an orthonormal basis of $L^2(\mathbb{R}^2,\mathbb{H})$ to an orthonormal basis of $L^2(Q\times Q, \mathbb{H})$. Thus $\ZT_q$ is unitary. 
\end{proof}

The next lemma shows that it is possible to reconstruct the signal $f$ only by the values of $\ZT_q f$ on the cube $Q$. This means we only have to analyze the Zak transform of the signal in the cube to reach global information about $f$.
\begin{lem}
If $f \in L^2(\mathbb{R}^2, \mathbb{H})$, then 
\[ f(\qx) = \int_Q \ZT_q f(\qx, \omega) \dx^2 \omega \]
\end{lem}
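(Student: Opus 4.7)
The plan is to substitute the definition of the quaternionic Zak transform, interchange sum and integral, and exploit the elementary orthogonality $\int_0^1 \exp(2\pi i m \omega)\dx\omega = \delta_{m,0}$ (and its $j$-analogue) to collapse the sum to the single term $\qm = \mathbf{0}$. The only subtlety is the non-commutativity: the factor $\exp(2\pi i m_1 \omega_1)$ sits to the left of $f(\qx-\qm)$ and $\exp(2\pi j m_2 \omega_2)$ to the right, so I must keep them in that order.

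First I would prove the identity on a dense subspace in which the convergence of the series is unproblematic, for instance $W_0$ (or even continuous compactly supported functions, which sit inside $W_0$). For such $f$, at each fixed $\qx$ only finitely many translates $f(\qx-\qm)$ are nonzero, so Fubini applies trivially. Writing $Q = [0,1]^2$ and splitting $\dx^2\omega = \dx\omega_1\dx\omega_2$,
\begin{align*}
\int_Q \ZT_q f(\qx,\omega)\dx^2\omega
&= \sum_{\qm \in \mathbb{Z}^2} \int_0^1\!\!\int_0^1 \exp(2\pi i m_1 \omega_1)\, f(\qx-\qm)\, \exp(2\pi j m_2 \omega_2)\dx\omega_1\dx\omega_2\\
&= \sum_{\qm \in \mathbb{Z}^2} \left(\int_0^1 \exp(2\pi i m_1 \omega_1)\dx\omega_1\right) f(\qx-\qm) \left(\int_0^1 \exp(2\pi j m_2 \omega_2)\dx\omega_2\right),
\end{align*}
where the bracket on the left pulls out since it is a $\mathbb{C}_i$-valued constant with respect to $\omega_2$, and similarly for the bracket on the right. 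Each bracket equals the respective Kronecker delta in $m_1$ or $m_2$, so only $\qm=\mathbf{0}$ survives and the right-hand side is $f(\qx)$.

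To extend to all $f \in L^2(\mathbb{R}^2,\mathbb{H})$ I invoke the preceding lemma: $\ZT_q$ is a unitary map onto $L^2(Q\times Q,\mathbb{H})$. By the Cauchy–Schwarz inequality applied on $Q$, the operator $f \mapsto \int_Q \ZT_q f(\cdot,\omega)\dx^2\omega$ is bounded from $L^2(\mathbb{R}^2,\mathbb{H})$ to $L^2(Q,\mathbb{H})$, with norm dominated by $\|\ZT_q f\|_{L^2(Q\times Q)} = \|f\|_2$. On the other hand $f \mapsto f|_Q$ (extended by quasiperiodicity if one wants a global identity on $\mathbb{R}^2$) is also bounded, and the two agree on the dense subspace $W_0$; hence they agree on all of $L^2$, with equality holding pointwise a.e.

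The main obstacle is the combination of non-commutativity and interchange of sum and integral: one has to be careful that the $\omega_1$-integration commutes past $f(\qx-\qm)$ only because its integrand $\exp(2\pi i m_1\omega_1)$ lies in $\mathbb{C}_i$ and is independent of $\omega_2$, and symmetrically for $\omega_2$; if the two exponentials had been on the same side of $f$, the separation of variables would fail. Once this bookkeeping is done, the proof is essentially a quaternionic replay of the classical inversion $f(x)=\int_0^1 Zf(x,\omega)\dx\omega$, and the density argument takes care of the $L^2$ convergence that the direct term-by-term calculation does not automatically supply.
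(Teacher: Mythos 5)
Your proof matches the paper's: both substitute the definition of $\ZT_q$, pull the integral inside the sum, and use the orthogonality $\int_0^1 \exp(2\pi i m\omega)\dx\omega = \delta_{m,0}$ (and its $j$-analogue) so that only the $\qm=\mathbf{0}$ term survives. The only difference is that you justify the interchange of sum and integral by first working on the dense subspace $W_0$ and then extending by boundedness of both sides, whereas the paper interchanges directly for general $f\in L^2$; yours is the more careful version of the same argument.
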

\begin{proof}
We use the definition of the Zak transform and obtain
\begin{align*}
&\int_{Q} \ZT_q f(\qx, \omega) \dx^2 \omega\\
&=\int_{Q} \sum_{\qm \in \mathbb{Z}^2} \exp(2 \pi i \omega_1 m_1) f(\qx - \qm) \exp(2 \pi j \omega_2 m_2 ) \dx^2 \omega\\
&=\int_{Q} f(\qx) \dx^2 \omega + \int_{Q} \sum_{\qm \neq 0} \exp(2 \pi i \omega_1 m_1) f(\qx - \qm)\exp(2 \pi j \omega_2 m_2)) \dx^2 \omega
\end{align*}
The first integral is just $f(\qx)$. In order to calculate the second integral we interchange the order of integration and summation and get
\begin{align*}
&\int_{Q} \sum_{\qm \neq 0} \exp(2 \pi i \omega_1 m_1) f(\qx - \qm) \exp(2 \pi j \omega_2 m_2)) \dx^2 \omega\\
&=\sum_{\qm \neq 0} \int_{[0,1]} \exp(2 \pi i \omega_1 m_1) \dx \omega_1 \  f(\qx - \qm)\  \int_{[0,1]} \exp(2 \pi j \omega_2 m_2) \dx \omega_2\\
&= 0.
\end{align*}
This completes our proof.
\end{proof}

\section{Relaxed Expansion}
We show now that an arbitrary function $f \in W_0$ can be expanded in a unique Gabor series as in \eqref{eq:Gaborseries} when one element in the middle of a cell is added. Denote by $\# = (1/2, 1/2)$ a point in the middle of the cell. We call the set $\Lambda^{\#} = \Lambda \cup \{\#\}$ the \emph{relaxed lattice}. We use similar concepts as in \cite{Palamodov}.\\

First of all, we need to define the sharp Poisson functional, which will be a necessary tool for the results.
\begin{defn}[Sharp Poisson functional]
The sharp Poisson functional is the series
\begin{align}
\gamma^{\#}(f) = (i \Theta_i(0))^{-1} \ZT_q f(\#) (j \Theta_j(0))^{-1}  \stackrel{\cdot}{=} \frac{\ZT_q f(\#)}{\ZT_q \qe'_0}
\end{align}
\end{defn}
The sharp Poisson functional has the following important property, which is needed to find the coefficient at the sharp point $\#$.
\begin{lem}
We have $\gamma^{\#}(\qe'_{\#}) = 1$ and $\gamma^{\#}(\qe'_{\lambda}) = 0$ for arbitrary $\lambda \in \Lambda$.
\end{lem}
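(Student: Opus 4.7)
The plan is to evaluate $\ZT_q \qe'_\lambda(\#)$ separately for $\lambda \in \Lambda$ and for $\lambda = \#$, and then substitute into the definition of $\gamma^{\#}$. For $\lambda \in \Lambda$, I would invoke the explicit formula from Remark \ref{rem:1}. There, the summation substitution $\qm' = -\qm - \qp$ preserves $\mathbb{Z}^2$ precisely because $\qp$ is an integer lattice point, so the formula is applicable and produces the factor $\Theta_i(\omega_1 + i x_1)\,\Theta_j(\omega_2 + j x_2)$. Evaluated at $\# = ((1/2,1/2),(1/2,1/2))$, both theta factors vanish by Remark \ref{rem:rem2}, so $\ZT_q \qe'_\lambda(\#) = 0$ and therefore $\gamma^{\#}(\qe'_\lambda) = 0$.

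For $\lambda = \#$ the parameters $\qp = \theta = (1/2,1/2)$ are half-integers, so the integer substitution in Remark \ref{rem:1} is no longer available and I would compute $\ZT_q \qe'_\#(\#)$ directly from its defining series. The key algebraic simplification is that combining the outer modulation $\exp(2\pi i m_1 \omega_1)$ with the inner modulation $\exp(\pi i (x_1 - m_1))$ coming from $\qe'_\#(\qx - \qm)$ yields $\exp(\pi i x_1)\exp(\pi i m_1 (2\omega_1 - 1))$, and analogously on the $j$-side. At the evaluation point $x_r = \omega_r = 1/2$ the $(2\omega_r - 1)$-modulations become trivial while the shifted Gaussians reduce to $\exp(-\pi m_r^2)$, so each of the resulting $m_1$- and $m_2$-sums collapses to the theta constant $\sum_m \exp(-\pi m^2) = \Theta_i(0) = \Theta_j(0)$. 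This gives
\[ \ZT_q \qe'_\#(\#) = \exp(\pi i/2)\,\Theta_i(0)\,\Theta_j(0)\,\exp(\pi j/2) = \bigl(i\,\Theta_i(0)\bigr)\bigl(\Theta_j(0)\,j\bigr). \]

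Substituting into $\gamma^{\#}$, the two sandwiching inverses cancel this product exactly, giving $\gamma^{\#}(\qe'_\#) = 1$. The principal delicacy throughout is the non-commutativity of $i$ and $j$: one has to keep the $i$-exponentials strictly on the left and the $j$-exponentials strictly on the right of the real Gaussian factors, so that the computed form of $\ZT_q \qe'_\#(\#)$ lines up exactly with the sandwich $(i\Theta_i(0))^{-1}\,(\,\cdot\,)\,(j\Theta_j(0))^{-1}$ in the definition of $\gamma^{\#}$ and the cancellation $j\cdot j^{-1} = 1$ goes through without the $\Theta_i(0)$ factor on the left getting in the way.
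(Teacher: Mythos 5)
Your proposal is correct and follows essentially the same route as the paper: direct evaluation of the defining Zak series at the sharp point to obtain $\ZT_q \qe'_{\#}(\#) = (i\,\Theta_i(0))(\Theta_j(0)\,j)$, and Remark \ref{rem:1} combined with the theta zeros from Remark \ref{rem:rem2} for $\lambda \in \Lambda$. Your explicit observation that the integer substitution in Remark \ref{rem:1} fails for the half-integer point $\#$ (hence the need for the direct computation) is a useful clarification the paper leaves implicit, but it does not change the argument.
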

\begin{proof}
The use of the Euler formula for quaternions gives for $\gamma^{\#}(\qe'_{\#})$
\begin{align*}
&(i \Theta_i(0))^{-1} \ZT_q \qe'_{1/2,1/2}(1/2, 1/2) (\Theta_j(0) j)^{-1} \\
&= (i \Theta_i(0))^{-1} \sum_{\qm \in \mathbb{Z}^2} \exp( \pi i m_1) \exp(- \pi m_1^2) \exp( \frac{i \pi}{2} + \pi i m_1) \exp(\pi j m_2) \exp(- \pi m_2^2) \exp(\frac{j \pi}{2} + \pi j m_2)(\Theta_j(0) j)^{-1}\\
&= (i \Theta_i(0))^{-1}\sum_{\qm \in \mathbb{Z}^2} \exp(- \pi m_1^2) (\cos(\pi/2) + i \sin(\pi/2)) (\cos(\pi/2) + j \sin(\pi/2)) \exp(- \pi m_2^2) (\Theta_j(0)j)^{-1}\\
&= (i \Theta_i(0))^{-1}(i \Theta_i(0)) (\Theta_j(0) j)(\Theta_j(0)j)^{-1}\\
&= 1
\end{align*}
And for $\lambda = (\qb, \omega)$   
\begin{align*}
&\gamma^{\#}(\qe'_{\lambda})\\
&=(i \Theta_i(0))^{-1} \ZT_q \qe'_{\lambda}(1/2, 1/2) (\Theta_j(0)j)^{-1}\\
&=(i \Theta_i(0))^{-1} \exp(\pi i (b_1 + \omega_1)) \exp(- \pi/4) \Theta_i(1/2 + i/2) \Theta_j(1/2 + j/2) \exp(\pi j (b_2 + \omega_2)) (j \Theta_j(0))^{-1}\\
&= 0
\end{align*}
since $\Theta_i$ and $\Theta_j$ are zero at the sharp point, see Remark \ref{rem:rem2}.
\end{proof}

In the following theorem we show that the Sharp Poisson functional is well-defined.
\begin{thm}
For any $f \in W_0$ there exists a family of continuous functionals $\gamma^{\lambda}$, $\lambda \in \Lambda$ in the space
$W_0$, such that
\begin{align}
f(\qx) = \sum_{\lambda \in \Lambda^{\#}} \exp(2 \pi i x_1 \omega_1) \gamma^{\#}(f) \exp(- \pi |\qx - \qb|^2) \exp(2 \pi j x_2 \omega_2) = \sum_{\lambda \in \Lambda^{\#}} \gamma^{\#}(f) \qe'_{\lambda}(\qx)
\end{align}
\end{thm}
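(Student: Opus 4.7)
The plan is to mirror Palamodov's argument for the complex case, transplanting it to the quaternionic setting via the unitary Zak transform. First I would remove the sharp contribution by setting $f_0 := f - \gamma^{\#}(f)\,\qe'_{\#}$; by the preceding lemma, $\ZT_q f_0(\#)=0$. The theorem then reduces to producing a unique, $W_0$-continuous family $\{\gamma^{\lambda}\}_{\lambda\in\Lambda}$ such that $f_0 = \sum_{\lambda\in\Lambda} \gamma^{\lambda}(f)\,\qe'_{\lambda}$.

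The key structural input is the zero of $\ZT_q\qe'_0$ on $Q\times Q$. By Remark~\ref{rem:1} and Remark~\ref{rem:rem2},
\[ \ZT_q \qe'_0(\qx,\omega) = \exp(-\pi x_1^2)\,\Theta_i(\omega_1+ix_1)\,\Theta_j(\omega_2+jx_2)\,\exp(-\pi x_2^2) \]
has a unique, simple zero in $Q\times Q$, located precisely at $\#$. I would then form the sandwich quotient
\[ h(\qx,\omega) := \Theta_i(\omega_1+ix_1)^{-1}\,\exp(\pi x_1^2)\,\ZT_q f_0(\qx,\omega)\,\exp(\pi x_2^2)\,\Theta_j(\omega_2+jx_2)^{-1}, \]
placing the $\mathbb{C}_i$-valued inverse on the left and the $\mathbb{C}_j$-valued inverse on the right, so that the non-commutative division is arranged consistently with the left/right structure of $\qe'_\lambda$. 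Lemma~\ref{lem:xx} ensures that $\ZT_q f_0$ is continuous on $\mathbb{R}^4$ and vanishes at $\#$, so the simple zeros of $\Theta_i$ and $\Theta_j$ cancel and $h$ extends continuously across $\#$. Combining the quasi-periodicity of $\ZT_q f_0$ with the transformation law in Remark~\ref{rem:rem2} then shows that $h$ is $\mathbb{Z}^2\times\mathbb{Z}^2$-periodic, so $h\in L^2(Q\times Q,\mathbb{H})$.

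Next I would expand $h$ in the quaternionic Fourier basis of $L^2(Q\times Q,\mathbb{H})$ (the same basis used implicitly in the earlier proof that $\ZT_q$ is unitary),
\[ h(\qx,\omega)=\sum_{\lambda=(\qp,\theta)\in\Lambda}\exp(2\pi i(\theta_1 x_1+p_1\omega_1))\,\gamma^{\lambda}(f)\,\exp(2\pi j(\theta_2 x_2+p_2\omega_2)), \]
with quaternionic $\ell^2$-coefficients $\gamma^{\lambda}(f)$. Multiplying both sides by $\ZT_q \qe'_0$, with the factors placed to match the sandwich, identifies the right-hand side, via Remark~\ref{rem:1}, as $\sum_{\lambda}\ZT_q$ of the Gabor atoms written in the Proposition~\ref{prop:31} form. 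Inverting the unitary Zak transform recovers $f_0$, and restoring $\gamma^{\#}(f)\,\qe'_{\#}$ yields the claimed expansion. Uniqueness is inherited from uniqueness of Fourier coefficients, and continuity of each $\gamma^{\lambda}$ on $W_0$ follows because every link in the chain $f\mapsto \ZT_q f\mapsto f_0\mapsto h\mapsto \gamma^{\lambda}(f)$ is continuous.

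The hardest step is verifying that the sandwich quotient $h$ really is well-defined and continuous at $\#$: this is not automatic, since $\mathbb{C}_i$ and $\mathbb{C}_j$ do not lie in a common commutative subalgebra, so the cancellation of the two simple zeros must be checked by a local Taylor expansion of $\Theta_i$, $\Theta_j$, and $\ZT_q f_0$ about $\#$, split consistently between the left and right factors. A secondary technical point is to confirm that the modulations $\exp(2\pi i(\theta_1 x_1+p_1\omega_1))\,q\,\exp(2\pi j(\theta_2 x_2+p_2\omega_2))$ with $q\in\mathbb{H}$ really form a complete quaternionic orthonormal system on $Q\times Q$, so that $\ell^2$-uniqueness of Fourier coefficients transfers faithfully to the Gabor coefficients $\gamma^{\lambda}(f)$.
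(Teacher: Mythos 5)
Your proposal follows essentially the same route as the paper: subtract the sharp contribution, form the sandwiched quotient of $\ZT_q f_{\#}$ by $\ZT_q \qe'_0$, expand it in a double quaternionic Fourier series on $Q\times Q$, multiply back by $\ZT_q\qe'_0$ to identify the terms as $\ZT_q\qe'_\mu$ via Remark~\ref{rem:1}, and conclude by unitarity of the Zak transform (the square-integrability of the quotient being the content of Lemma~\ref{lem:x3}). The extra care you flag about the non-commutative cancellation of the simple zeros at $\#$ and the completeness of the sandwiched exponential system is a genuine refinement of points the paper treats only briefly, but it does not change the argument's structure.
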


\begin{proof}
By Lemma \ref{lem:xx}, $\gamma^{\#}$ is well-defined in $W_0(\mathbb{R}^2)$. Set
\begin{align}
f_{\#}(\qx) = f(\qx) - \exp(- \pi (x_1 - \frac{1}{2})^2) \exp(\pi i x_1 \omega_1) \gamma^{\#}(f) \exp(- \pi (x_2 - \frac{1}{2})^2) \exp(\pi j x_2 \omega_2) 
\end{align}
and define $F(\qx, \omega) = \ZT_q f_{\#} / \ZT_q \qe'_{0} = \exp(\pi x_1^2) (\Theta_i(\omega_1 + i x_1))^{-1} \ZT_q f_{\#}(\qx, \omega) (\Theta_j( \omega_2 + j x_2)) \exp(\pi x_2^2)$. By Lemma \ref{lem:x3}, $F$ is square integrable and has the following double Fourier series expansion
\begin{align}\label{eq:Fqxomega}
F(\qx, \omega) = \sum_{\mu = (\qp, \eta) \in \Lambda} \exp(2 \pi i (x_1 p_1 + \omega_1 \eta_1)) c_{\mu} \exp(2 \pi j (x_2 p_2 + \omega_2 \eta_2)) 
\end{align} 
By Proposition \ref{prop:31}, the series $\sum \exp(2 \pi i x_1 \omega_1) c_\mu \exp(- \pi |\qx - \qb|^2) \exp(2 \pi j x_2 \omega_2) = \sum c_\mu \qe'_\mu$ converges to a function $g$.\\
On the other hand we have with Remark \ref{rem:1}
\begin{align*}
&\ZT_q f_{\#}(\qx, \omega)\\
&= \exp(- \pi x_1^2) \Theta_i(\omega_1 + i x_1) \left[\sum_{\mu = (\eta, \qp) \in \Lambda} \exp(2 \pi i (x_1 p_1 + \omega_1 \eta_1)) c_{\mu} \exp(2 \pi j (x_2 p_2 + \omega_2 \eta_2)) \right] \Theta_j(\omega_2 + j x_2) \exp(- \pi x_2^2)\\
&= \sum_{\mu = (\eta, \qp) \in \Lambda} c_{\mu} \ZT_q \qe'_{\mu}.
\end{align*}
Which means by the unitarity of $\ZT_q$ that $f_{\#} = g$, set $\gamma^\lambda(f) = c_{\mu}$ for $\mu \in \Lambda$.
\end{proof}

\begin{lem}\label{lem:x3}
We have $F(\qx, \omega) \in L^2(Q \times Q)$.
\end{lem}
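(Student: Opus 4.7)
The plan is to show that $F$ is bounded on $Q\times Q$, whence $F\in L^\infty(Q\times Q)\subset L^2(Q\times Q)$ since $Q\times Q$ has finite measure.

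First, I would verify that $f_\# \in W_0$: it is the difference of $f\in W_0$ and the Schwartz function $\gamma^\#(f)\,\qe'_\#$. By Lemma \ref{lem:xx}, $\ZT_q f_\#$ is therefore continuous on $\mathbb{R}^2\times\mathbb{R}^2$, and quasi-periodicity makes it bounded on $Q\times Q$. From Remark \ref{rem:rem2}, the denominator $\ZT_q\qe'_0 = \exp(-\pi(x_1^2+x_2^2))\,\Theta_i(\omega_1+ix_1)\,\Theta_j(\omega_2+jx_2)$ vanishes in $Q\times Q$ exactly on $Z = Z_1 \cup Z_2$, where $Z_1=\{x_1=\omega_1=\tfrac12\}$ and $Z_2=\{x_2=\omega_2=\tfrac12\}$ meet at $\#$, with both Theta factors having simple zeros along their respective sets. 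Away from $Z$ the denominator is uniformly bounded below on compact subsets, so $F$ is manifestly continuous there.

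The \emph{crux} is to show that $\ZT_q f_\#$ vanishes along all of $Z_1$ and $Z_2$ to the same simple order. At the isolated 4D point $\#=Z_1\cap Z_2$ this is built in by the very definition of $\gamma^\#(f)$. To extend the cancellation to the full 2-planes, I would adapt the computation of Remark \ref{rem:1} to $\qe'_\#$: the half-integer shift translates the Theta zero loci of $\ZT_q\qe'_\#$ to the corners of $Q$, so $\ZT_q\qe'_\#$ is itself nonzero on $Z$. Combining this with the periodicity identities of Remark \ref{rem:rem2} and the subtraction defining $f_\#$, one aims to establish a local factorization $\ZT_q f_\# = \Theta_i(\omega_1+ix_1)\,G\,\Theta_j(\omega_2+jx_2)$ near $Z$ with continuous $G$; the two quaternionic inversions in $F$ then cancel the Theta factors and leave a bounded continuous function on $Q\times Q$.

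The \emph{main obstacle} is precisely this upgrade from pointwise to 2-plane vanishing. In Palamodov's one-dimensional setting the Theta zero is isolated and a single scalar cancellation already suffices for $L^2$-integrability; here $Z$ has positive Lebesgue dimension and $|\Theta_i\Theta_j|^{-2}$ is not locally integrable across $Z$, so pointwise cancellation at $\#$ alone is not enough. The required rigidity must be extracted from the joint (quasi-)holomorphic character of $\ZT_q f_\#$ in the two commuting complex planes $\mathbb{C}_i$ and $\mathbb{C}_j$, inherited from the Bargmann--Fock-type structure of the quaternionic Zak transform.
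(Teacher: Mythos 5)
Your proposal does not prove the lemma: you correctly reduce everything to showing that $\ZT_q f_{\#}$ vanishes along the \emph{entire} zero set $Z_1\cup Z_2$ of the denominator $\ZT_q\qe'_0$, and then leave exactly that step open, offering only the hope of a local factorization $\ZT_q f_{\#}=\Theta_i\,G\,\Theta_j$ extracted from ``joint holomorphy''. No such factorization exists for general $f\in W_0$: the single quaternionic interpolation condition $\gamma^{\#}(f_{\#})=0$ kills $\ZT_q f_{\#}$ at the one point $\#=Z_1\cap Z_2$, but cannot force vanishing on the two-dimensional planes $Z_1=\{x_1=\omega_1=\tfrac12\}$ and $Z_2=\{x_2=\omega_2=\tfrac12\}$. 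Concretely, for $f(\qx)=e^{\pi i x_1}e^{-\pi(x_1-1/2)^2}e^{-\pi x_2^2}\in W_0$ the Zak transform factors into an $i$-part and a $j$-part; the $j$-part vanishes at $(x_2,\omega_2)=(\tfrac12,\tfrac12)$, so $\ZT_q f(\#)=0$, $\gamma^{\#}(f)=0$ and $f_{\#}=f$, yet $\ZT_q f$ is bounded away from zero near generic points of $Z_1$. There $|F|^2\gtrsim \big((x_1-\tfrac12)^2+(\omega_1-\tfrac12)^2\big)^{-1}$, which is not locally integrable in the $(x_1,\omega_1)$-plane, so $F\notin L^2(Q\times Q)$. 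The missing step in your argument is therefore not a technicality that further work along your lines could supply.

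That said, your diagnosis is sharper than the paper's own proof, which passes from ``$\ZT_q f_{\#}$ is continuous and vanishes at $\#$'' to the bound $|\ZT_q f_{\#}(z_1,z_2)|\le|z_1-\tfrac12-\tfrac{i}{2}|\,|z_2-\tfrac12-\tfrac{j}{2}|$ and hence to $|F|\le 1$. That implication is invalid: the right-hand side vanishes on all of $Z_1\cup Z_2$, so the inequality silently presupposes precisely the two-plane vanishing you single out as the obstacle. In Palamodov's one-dimensional setting the theta zero is an isolated point of a two-dimensional cube, so one interpolation condition suffices; in the present product setting the zero locus of $\Theta_i(\omega_1+ix_1)\Theta_j(\omega_2+jx_2)$ has positive dimension, and a single relaxed point cannot control it. Repairing the lemma would seem to require either additional relaxed interpolation data along $Z_1$ and $Z_2$ or a restriction on the admissible signals; neither your argument nor the paper's supplies this.
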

\begin{proof}
The function $f_{\#}$ is an element of $W_0$. By Lemma \ref{lem:xx}, $\ZT_q f_{\#}(\qx, \omega)$ is continuous. It vanishes at the sharp point since $\gamma^{\#}(f_{\#}) = 0$. Therefore, with $z_1 = x_1 + i \omega_1$ and $z_2 = x_2 + j \omega_2$, $|\ZT_q f(z_1, z_2)| \leq |z_1 - i/2 - 1/2|\,|z_2 - j/2 - 1/2|$. This implies
\begin{align}
|F(z_1, z_2)| = \left| (\Theta_i(z_1))^{-1} \exp(\pi \qx^2) \ZT_q f_{\#} (\Theta_j(z_2))^{-1} \right| \leq \frac{|z_1 - i/2 - 1/2|\,|z_2 - j/2 - 1/2|}{|z_1 - i/2 - 1/2|\,|z_2 - j/2 - 1/2|} =  1.
\end{align}
And therefore it is square-integrable in the cube. 
\end{proof}

In the next theorem we are going to show that this expansion is linear independent. 
\begin{thm}
If $\{c_\lambda \} \in \ell^2(\Lambda)$, $\qu \in \mathbb{H}$ and
\begin{equation}\label{eq:ad}
\exp(\pi i x_1 \omega_1) \qu  \exp(- \pi |\qx - \frac{1}{2}|^2) \exp(\pi j x_2 \omega_2) + \sum_{\lambda \in \Lambda} \exp(2 \pi i x_1 \omega_1)  c_{\mu} \exp(- \pi |\qx - \qb|^2) \exp(2 \pi j x_2 \omega_2) = 0
\end{equation}
or in short notation
\[ \qu \qe'_{\#}(\qx) + \sum_{\lambda \in \Lambda} c_\mu \qe'_{\mu}(\qx) = 0  \]
then $c_{\lambda} = 0$ for all $\lambda \in \Lambda$.
\end{thm}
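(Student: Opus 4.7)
The plan is to apply the quaternionic Zak transform $\ZT_q$ to both sides of the assumed identity and exploit the simple zeros of the theta series $\Theta_i,\Theta_j$ to separate the sharp-point contribution $\qu$ from the lattice part. Since $\sum c_\mu \qe'_\mu$ converges in $L^2$ by Proposition~\ref{prop:31} and $\ZT_q$ is an isometry, the relation passes to an equality in $L^2(Q\times Q,\QH)$.

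By Remark~\ref{rem:1}, each lattice term factors as
\[
\ZT_q(c_\mu\qe'_\mu)(\qx,\omega) \;=\; L(\qx,\omega)\,\bigl(\exp(2\pi i(\theta_1 x_1 + p_1\omega_1))\,c_\mu\,\exp(2\pi j(\theta_2 x_2 + p_2\omega_2))\bigr)\,R(\qx,\omega),
\]
with the $\mu$-independent sandwich factors $L = \exp(-\pi x_1^2)\Theta_i(\omega_1+ix_1)$ and $R = \Theta_j(\omega_2+jx_2)\exp(-\pi x_2^2)$, using that $\Theta_i$ is $\mathbb{C}_i$-valued and commutes with $\exp(2\pi i(\cdot))$, and similarly for $\Theta_j$. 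Summing yields $\ZT_q(\sum c_\mu \qe'_\mu) = L\,G\,R$, where
\[
G(\qx,\omega) \;=\; \sum_{\mu=(\qp,\theta)\in\Lambda}\exp(2\pi i(\theta_1 x_1+p_1\omega_1))\,c_\mu\,\exp(2\pi j(\theta_2 x_2+p_2\omega_2))
\]
is a two-sided quaternion Fourier series in $L^2(Q\times Q,\QH)$ with $\|G\|_2^2 = \sum |c_\mu|^2$. The analogous computation for $\qu\qe'_\#$, where $\qp=\theta=(1/2,1/2)$ forces half-integer rather than integer shifts, produces $\ZT_q(\qu\qe'_\#) = \widetilde L\,\qu\,\widetilde R$ with $\widetilde L,\widetilde R$ carrying the \emph{shifted} theta series $\Theta_i(\omega_1-\tfrac12+i(x_1-\tfrac12))$ and $\Theta_j(\omega_2-\tfrac12+j(x_2-\tfrac12))$, which by Remark~\ref{rem:rem2} are continuous and non-vanishing in a neighbourhood of $\#$.

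Dividing by $L$ on the left and $R$ on the right (valid almost everywhere, since $L,R$ are scalar in $\mathbb{C}_i,\mathbb{C}_j$ and vanish only on a null set), the assumption becomes $L^{-1}\widetilde L\,\qu\,\widetilde R R^{-1} = -G$. The right-hand side lies in $L^2(Q\times Q,\QH)$; on the left, the ratios $L^{-1}\widetilde L$ and $\widetilde R R^{-1}$ are meromorphic in $z_1=\omega_1+ix_1$ and $z_2=\omega_2+jx_2$ with a \emph{simple} pole at $z_1=1/2+i/2$ and $z_2=1/2+j/2$ respectively, coming from the simple zeros of $\Theta_i,\Theta_j$ guaranteed by Remark~\ref{rem:rem2}. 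Near $\#$ the sandwich therefore has magnitude $\sim |\qu|\cdot|z_1-\tfrac12-\tfrac{i}{2}|^{-1}|z_2-\tfrac12-\tfrac{j}{2}|^{-1}$, which is not square-integrable on any four-dimensional neighbourhood of $\#$ unless $\qu=0$. This forces $\qu=0$. The identity then reduces to $L\,G\,R=0$ in $L^2$, so $G\equiv 0$ almost everywhere; and since the two-sided exponentials $\{\exp(2\pi i(\theta_1 x_1+p_1\omega_1))(\cdot)\exp(2\pi j(\theta_2 x_2+p_2\omega_2))\}_{(\qp,\theta)\in\mathbb{Z}^4}$ form an orthonormal basis of $L^2(Q\times Q,\QH)$ under the symmetric real scalar product~\eqref{eq:Sc}, every $c_\mu$ must vanish.

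The main technical obstacle I expect is the pole-versus-$L^2$ dichotomy in the quaternionic sandwich: the four factors $L^{-1},\widetilde L,\widetilde R,R^{-1}$ live in the distinct sub-planes $\mathbb{C}_i,\mathbb{C}_j$ and surround the constant $\qu$, so one has to isolate the scalar modulus $|\qu|$ from the non-commuting phases carefully and rule out any non-trivial cancellation of poles by $\qu$. Once that is verified, the remainder of the argument reduces to the standard scalar observation that $|z_1|^{-1}|z_2|^{-1}\notin L^2_{\text{loc}}(\mathbb{R}^4)$ together with the uniqueness of the quaternionic Fourier expansion.
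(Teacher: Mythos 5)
Your proof is correct and follows essentially the same route as the paper's: apply the quaternionic Zak transform, factor the lattice part through the theta functions of Remark~\ref{rem:1}, and play the simple zero of $\Theta_i\Theta_j$ at the sharp point against the non-vanishing of $\ZT_q \qe'_{\#}$ there, so that the resulting quotient cannot be square-integrable near $\#$ unless $\qu=0$, after which uniqueness of the two-sided Fourier coefficients forces every $c_\mu$ to vanish. You are in fact somewhat more explicit than the paper about the shifted theta series produced by the half-integer point and about using multiplicativity of the quaternionic modulus to rule out cancellation, both of which the paper leaves implicit.
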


\begin{proof}
By Proposition \ref{prop:31}, the series in \eqref{eq:ad} converges in $L^2(\mathbb{R}^2, \mathbb{H})$. We apply the Zak transform and get
\begin{align*}
&- \qu \ZT_q \qe'_{\#}(\qx, \omega)\\
& = \exp(- \pi x_1^2) \Theta_i(\omega_1 + i x_1) \left[\sum_{\lambda \in \Lambda} \exp(2 \pi i x_1 \omega_1) c_{\lambda} \exp(- \pi |\qx - \qb|^2) \exp(2 \pi j x_2 \omega_2)\right] \Theta_j(\omega_2 + j x_1) \exp(- \pi x_2^2)
\end{align*}
The sum converges to a function $g(\qx)$. Therefore 
\begin{align}
g(\qx) = -\frac{ \qu \ZT_q \qe'_{\#}}{\ZT_q \qe'_0} =  - \exp(\pi x_1^2) (\Theta_i(\omega_1 + i x_1))^{-1} \left[ \qu \ZT_q \qe'_{\#}(\qx, \omega) \right] (\Theta_j(\omega_2 + j x_2))^{-1} \exp(\pi x_2^2)
\end{align}
We already know that $\ZT_q \qe'_0(\qx, \omega) = 0$ for the sharp point $\#$, whereas $\ZT_q \qe'_{\#}(\qx, \omega) \neq 0$. Therefore $g(\qx)$ can only be an element of $L^2(\mathbb{R}^2, \mathbb{H})$ if and only if $\qu$ is zero. This means that $g(\qx) = 0$ and therefore $c_{\lambda} = 0$ for $\lambda \in \Lambda$. 
\end{proof}

\begin{rem}
The last Theorem shows that the series expansion on the relaxed lattice is unique. 
\end{rem}

\begin{cor}
We get the coefficients via 
\[ \gamma^{\lambda}(f) = \int_{Q} \int_{Q} \frac{\ZT_q f_{\#}(\qx, \omega)}{\ZT_q \qe'_{\lambda}(\qx, \omega)} \dx^2 \qx \dx^2 \omega \]
\end{cor}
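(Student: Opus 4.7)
The plan is to identify the coefficients $\gamma^{\lambda}(f)$ with the Fourier coefficients of the $Q\times Q$-periodic function $F = \ZT_q f_{\#}/\ZT_q \qe'_{0}$ constructed in the proof of the preceding theorem, and then rewrite those Fourier coefficients directly as an integral of $\ZT_q f_{\#}/\ZT_q \qe'_{\lambda}$. By Lemma \ref{lem:x3}, $F \in L^2(Q\times Q, \QH)$, and the preceding theorem gives $\gamma^{\lambda}(f)=c_{\mu}$, where the $c_{\mu}$ appear in the expansion
\[ F(\qx,\omega) = \sum_{\mu=(\eta,\qp)\in \Lambda} \exp(2\pi i(x_1 p_1+\omega_1\eta_1))\,c_{\mu}\,\exp(2\pi j(x_2 p_2+\omega_2\eta_2)). \]
The family $\{\exp(2\pi i(x_1 p_1+\omega_1\eta_1))\cdot 1\cdot\exp(2\pi j(x_2 p_2+\omega_2\eta_2))\}_{(\eta,\qp)\in\Lambda}$ is an orthonormal basis of $L^2(Q\times Q,\QH)$ with respect to the real symmetric scalar product (by the argument used in the unitarity lemma for $\ZT_q$ applied to the special window $\mathcal{X}_Q$), so the first step is the two-sided Fourier inversion
\[ c_{\mu} = \int_Q\int_Q \exp(-2\pi i(x_1 p_1+\omega_1\eta_1))\,F(\qx,\omega)\,\exp(-2\pi j(x_2 p_2+\omega_2\eta_2))\,\dx^2\qx\,\dx^2\omega. \]

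Next, I would substitute $F = \exp(\pi x_1^2)\,\Theta_i(\omega_1+ix_1)^{-1}\,\ZT_q f_{\#}\,\Theta_j(\omega_2+jx_2)^{-1}\,\exp(\pi x_2^2)$ into this integral and regroup factors, exploiting that $\exp(-2\pi i(\cdot))$ and $\Theta_i^{-1}$ both lie in the commutative subfield $\mathbb{C}_i$ (and hence also commute with the real Gaussian), and symmetrically on the right in $\mathbb{C}_j$. The integrand then becomes
\[ \bigl(\exp(2\pi i(x_1 p_1+\omega_1\eta_1))\,\Theta_i(\omega_1+ix_1)\bigr)^{-1}\,\exp(\pi|\qx|^2)\,\ZT_q f_{\#}(\qx,\omega)\,\bigl(\Theta_j(\omega_2+jx_2)\,\exp(2\pi j(x_2 p_2+\omega_2\eta_2))\bigr)^{-1}. \]

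Finally, applying Remark \ref{rem:1} to $\qe'_{\lambda}$ with position $\eta$ and frequency $\qp$ yields
\[ \ZT_q \qe'_{\lambda} = \bigl(\exp(2\pi i(x_1 p_1+\omega_1\eta_1))\,\Theta_i(\omega_1+ix_1)\bigr)\,\exp(-\pi|\qx|^2)\,\bigl(\Theta_j(\omega_2+jx_2)\,\exp(2\pi j(x_2 p_2+\omega_2\eta_2))\bigr), \]
so the integrand above is precisely the two-sided quotient $\ZT_q f_{\#}/\ZT_q \qe'_{\lambda}$, interpreted in the same informal sense that the paper already uses for the sharp Poisson functional via the notation $\stackrel{\cdot}{=}$. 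Combining the three steps yields the corollary. The main obstacle is the careful handling of non-commutativity: the ratio in the corollary only becomes meaningful once $\ZT_q \qe'_{\lambda}$ has been split into a $\mathbb{C}_i$-valued left block, a real central factor, and a $\mathbb{C}_j$-valued right block, a splitting that relies on the two-sided design of the QFT and on the Gaussian being real-valued. Once this bookkeeping is in place, the formula is a direct consequence of Fourier inversion on the torus.
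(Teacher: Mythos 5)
Your proposal is correct and follows essentially the same route as the paper: identify $\gamma^{\lambda}(f)$ with the two-sided Fourier coefficients of $F=\ZT_q f_{\#}/\ZT_q \qe'_{0}$ on $Q\times Q$, extract them by two-sided Fourier inversion, and absorb the resulting exponentials into the denominator via Remark \ref{rem:1} to turn $\ZT_q \qe'_{0}$ into $\ZT_q \qe'_{\lambda}$. You merely spell out the non-commutative bookkeeping (the $\mathbb{C}_i$-left / real-middle / $\mathbb{C}_j$-right splitting) that the paper leaves implicit, which is a welcome clarification but not a different argument.
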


\begin{proof}
We have 
\begin{align*}
\frac{\ZT_q f_{\#}(\qx, \omega)}{\ZT_q \qe'_{0}(\qx, \omega)} = F(\qx, \omega) = \sum_{\lambda = (\qp, \eta) \in \Lambda} \exp(2 \pi i (x_1 p_1 + \omega_1 \eta_1)) c_{\lambda} \exp(2 \pi j (x_2 p_2 + \omega_2 \eta_2)) 
\end{align*} 
Extracting the coefficients $c_{\lambda}$ by using the inverse Fourier transform gives
\begin{equation*}
c_{\lambda} = \int_Q \int_Q \exp(-2 \pi i (x_1 l_1 + \omega_1 k_1)) \frac{\ZT_q f_{\#}(\qx, \omega)}{\ZT_q \qe'_{0}(\qx, \omega)} \exp(-2 \pi j (x_2 l_2 + \omega_2 k_2)) \dx^2 \qx \dx^2 \omega.
\end{equation*}
Using Remark \ref{rem:1} we obtain the desired formula.
\end{proof}

\section{Further research}
The requirement of our signal to be an element of $W_0(\mathbb{R}^2)$ is a strong restriction. The main condition for our proof is $\ZT_q f_{\#}$ to be continuous and square integrable. A possible extension for further research could be Sobolev spaces or modulation spaces. Another interesting topic for further research could be the investigation of the convergence properties of the coefficients if more points are added to the lattice.  

\section{Acknowledgements}
The author gratefully acknowledges the many helpful suggestions of S. Bernstein, P. Cerejeiras and U. Kähler during the preparation of the paper. This work was supported through the ERASMUS program and Portuguese funds through the CIDMA - Center for Research and Development in Mathematics and Applications, and the Portuguese Foundation for Science and Technology (``FCT - Funda\c{c}\~ao para a Ci\^encia e a Tecnologia''), within project UID/MAT/0416/2013. 

\bibliographystyle{plain}
\bibliography{References}

\begin{thebibliography}{10}

\bibitem{Bahri-Quat_Gabor}
M.~Bahri and R.~Ashino.
\newblock Two-dimensional quaternionic windowed {F}ourier transform.
\newblock {\em Fourier Transforms - Approach to Scientific Principles, Prof.
  Goran Nikolic (Ed.)}, pages 247 -- 260, 2011.

\bibitem{HITZER-Uncertainty}
M.~Bahri, E.~Hitzer, A.~Hayashi, and R.~Ashino.
\newblock An uncertainty principle for quaternion fourier transform.
\newblock {\em Computers and Mathematics with Applications}, 56:2411--2417,
  2008.

\bibitem{Hitzer-Uncertainty2}
M.~Bahri, E.~Hitzer, A.~Hayashi, and R.~Vaillancourt.
\newblock Windowed fourier transform of two-dimensional quaternionic signal.
\newblock {\em Applied Mathematics and Computation}, 216:2366--2379, 2010.

\bibitem{Bellman1961}
R.~Bellman.
\newblock {\em A brief introduction to theta functions}.
\newblock Holt Rinehart and Winston, New York, 1961.

\bibitem{Heise}
S.~Bernstein, J.~L. Bouchot, M.~Reinhardt, and B.~Heise.
\newblock Generalized analytic signals in image processing: Comparison, theory
  and applications.
\newblock In S.~Sangwine and E.~Hitzer, editors, {\em Quaternion and
  Clifford-Fourier Transforms and Wavelets}, pages 221 -- 246, Basel, 2013.
  Birkh{\"a}user.

\bibitem{BULOWPHD}
T.~B{\"u}low.
\newblock {\em Hypercomplex Spectral Signal Representations for the Processing
  and Analysis of Images}.
\newblock PhD thesis, Christian-Albrechts-Universit{\"a}t Kiel, 1999.

\bibitem{SpringerPaula}
P.~Cerejeiras and U.~K{\"a}hler.
\newblock Monogenic signal theory.
\newblock {\em Springer References - Operator Theory}, pages 1--22, 2014.

\bibitem{Baraniuk}
W.~L. Chan, H.~Choi, and R.~Baraniuk.
\newblock Coherent image processing using quaternion wavelets.
\newblock {\em Wavelet Applications Signal Image Processing XI}, pages 1--10,
  2005.

\bibitem{Baraniuk2}
W.~L. Chan, H.~Choi, and R.~Baraniuk.
\newblock Coherent multiscale image processing using dual-tree quaternion
  wavelets.
\newblock {\em IEEE Transactions on Image Processing}, 17:1069--1082, 2008.

\bibitem{TAELL}
T.~A. Ell.
\newblock Quaternion {F}ourier transform: {R}e-tooling image and signal
  processing analysis.
\newblock In E.~Hitzer and S.~Sangwine, editors, {\em Quaternion and {C}lifford
  {F}ourier Transforms and Wavelets}. Spinger-Verlag, Berlin Heidelberg New
  York, 2013.

\bibitem{BalianLowforWQFT}
Y.~Fu, U.~K{\"a}hler, and P.~Cerejeiras.
\newblock The {B}alian-{L}ow theorem for the windowed quaternionic {F}ourier
  transform.
\newblock {\em Advances in Applied Clifford Algebras}, pages 22: 1025--1040,
  2012.

\bibitem{Zakquat}
Y.~Fu, U.~K{\"a}hler, and P.~Cerejeiras.
\newblock The {B}alian-{L}ow theorem for the windowed {C}lifford {F}ourier
  transform.
\newblock In S.~Sangwine and E.~Hitzer, editors, {\em Quaternion and Clifford
  Fourier Transforms and Wavelets}, pages 299 -- 319. Birkh{\"a}user, Basel,
  2013.

\bibitem{Grchenig2001}
K.~Gr{\"o}chenig.
\newblock {\em Foundations of Time-Frequency Analysis}.
\newblock Springer Science + Business Media, Berlin Heidelberg, 2001. edit.
  edition, 2001.

\bibitem{Sprssig}
K.~G{\"u}rlebeck, K.~Habetha, and W.~Spr{\"o}ssig.
\newblock {\em Funktionentheorie in der {E}bene und im {R}aum}.
\newblock Springer Science + Business Media, Berlin Heidelberg, 1. aufl.
  edition, 2006.

\bibitem{Hitzer-QFT}
E.~Hitzer.
\newblock Quaternion {F}ourier transform on quaternion fields and
  generalizations.
\newblock {\em Advances in Applied Clifford Algebras, Volume 17}, pages
  497--517, 2007.

\bibitem{OppenheimLim}
A.~V. Oppenheim and J.~S. Lim.
\newblock The importance of phase in signals.
\newblock {\em Proceedings IEEE (69) 5}, pages 529 -- 541, 1981.

\bibitem{Palamodov}
V.~P. Palamodov.
\newblock Relaxed {G}abor expansion at critical density and a certainty
  principle.
\newblock {\em arXiv:math/0508071v1}, 2005.

\bibitem{Shapiro}
M.~Shapiro and L.M. Tovar.
\newblock On a class of integral representations related to the two-dimensional
  helmholtz operator.
\newblock {\em Contemporary Mathematics 212}, pages 229--244, 1998.

\end{thebibliography}

\end{document}